\newtheorem{theorem}{Theorem}[section]
\newtheorem{proposition}[theorem]{Proposition}
\newtheorem{corollary}[theorem]{Corollary}
\newtheorem{remark}[theorem]{Remark}
\newtheorem{lemma}[theorem]{Lemma}
\newtheorem{conjecture}[theorem]{Conjecture}
\newcommand{\ra}{\rightarrow}
\newcommand{\PP}{\mathbb P}
\newcommand{\cO}{\mathcal{O}}
\newcommand{\cL}{\mathcal{L}}
\newcommand{\cM}{\mathcal{M}}
\newcommand{\rk}{\mbox{rk}}
\newcommand{\Cl}{\mbox{Cliff}}
\numberwithin{equation}{section}
\begin{document}

\baselineskip=16pt

\title[Butler's conjecture]{On linear series and a conjecture of D. C. Butler}

\author{U. N. Bhosle, L. Brambila-Paz and P. E. Newstead}

\address{Department of Mathematics, Indian Institute of Science, Bangalore 560012, India}
\email{usha@math.tifr.res.in}

\address{CIMAT A.C., Jalisco S/N, Mineral de Valencia, 36240, Guanajuato, Guanajuato,
M\'exico}
\email{lebp@cimat.mx}

\address{Department of Mathematical Sciences, The University of Liverpool,
Peach Street, Liverpool L69 7ZL, UK}
\email{newstead@liv.ac.uk}

\keywords{Linear series, coherent systems, stability, Brill-Noether, Petri curve}

\subjclass[2010]{14H60}

\date{\today}
\thanks{All authors are members of the research group Vector Bundles on Algebraic Curves. The second author thanks ICTP for hospitality during the preparation of this work.}



\begin{abstract} Let $C$ be a smooth irreducible projective curve
of genus $g$ and $L$ a line bundle of degree $d$ generated by a
linear subspace $V$ of $H^0(L)$ of dimension $n+1$. We prove a conjecture of D. C. Butler on the semistability of the kernel of the evaluation map $V\otimes{\mathcal O}_C\to L$ and obtain new results on the stability of this kernel. The natural context for this problem is the theory of coherent systems on curves and our techniques involve wall crossing formulae in this theory.

\end{abstract}

\maketitle

\section{Introduction}\label{intro}

Our primary object in this paper is to prove a long-standing conjecture of D. C. Butler on the semistability of the kernels of evaluation maps of linear series on curves. We also extend substantially the known results on the stability of these kernels. 

Let $C$ be a smooth irreducible projective curve of genus $g$. Let
$(L,V)$ be a linear series of type $(d,n+1)$ with $L$ a generated
line bundle of degree $d$ on $C$ and $V\subset H^0(L)$ a linear
subspace of dimension $n+1$ which generates $L$. We suppose that
$n\ge2.$ 

The kernel of the evaluation map  $V \otimes\cO_C\ra L$,
denoted by $M_{L,V}$, fits in an exact sequence
\begin{equation}\label{eq0}
0\ra M_{L,V}\ra V \otimes\cO_C\ra L\ra0.
\end{equation}
The vector bundle $M_{L,V}$ is variously called a kernel bundle or a syzygy bundle (sometimes also a Lazarsfeld bundle). When $V=H^0(L)$ (in other words, the linear series is complete), the bundle $M_{L,H^0(L)}$ is often denoted by $M_L$ and can be seen as an integral transform of $L$ (see, for example, \cite{el}). The bundle $M_{L,V}$ and its dual $M^*_{L,V}$ have been extensively studied over many years, mainly because of their application in syzygy problems. In fact, the Koszul cohomology groups encode certain properties of $M_{L,V}$ and its exterior powers (see, for example, \cite[section 2.1]{an}) and are fundamental in the study of syzygies (see, among others, \cite{an} for a general account, \cite{voi1,voi2,te,te1} for Green's Conjecture and \cite{fmp} for the Minimal Resolution Conjecture). For an overview of early work in this area, see \cite{laz}. Note that the
sequence $(\ref{eq0})$ can also be seen as the pullback of the
(dual of the) Euler sequence via the morphism $\phi_{L,V}:C\to
\PP(V^*)=\PP^n$ defined by the subspace $V$ of $H^0(L)$, so that
$M_{L,V}\simeq \phi^*_{L,V}(\Omega_{\PP^n}(1))$. The natural context for the study of $M_{L,V}^*$ is that of coherent systems (see section \ref{back} for more details). The bundles $M_L$ have also been studied in connection with theta-divisors \cite{pop1,pop2,bea,mis,ms}.

One of the main questions about $M_{L,V}$ is when it is stable or semistable; this has major implications in higher rank Brill-Noether theory (see, for example, \cite[section 11]{bgmn}, \cite[section 9]{bbn}). In the complete case, $M_L$ is semistable (stable) if $d\ge2g$ ($d>2g$) (\cite[Proposition 3.2]{el}; see also \cite[Theorem 1.2]{bu1}). This result was extended to the range $d\ge2g- \Cl(C)$ in \cite{cc} (here $\Cl(C)$ denotes the Clifford index of $C$). For $C$ a general curve, $M_L$ is always semistable and precise conditions for stability are known \cite[Theorem 2]{bu} (see also \cite{sch}). An interesting result, not restricted to general curves, is that, if $L$ computes $\Cl(C)$, then $M_L$ is semistable (and even stable unless $C$ is hyperelliptic) (see \cite[Theorem 1.3]{le}, \cite[Corollary 5.5]{ms}).

Much less is known when $V\ne H^0(L)$. In \cite{bu}, D. C. Butler made a
conjecture \cite[Conjecture 2]{bu} for the more general case where
$L$ is replaced by a semistable vector bundle $E$ and $V$ is any linear subspace of $H^0(E)$ that generates $E$; for the case of a
line bundle, this can be stated in the following form (except that
Butler restricts to the case $g\ge3$).
\begin{conjecture}\label{conj1}For $C$ a general curve of genus $g\ge1$
and a general choice of $(L,V)$, the bundle $M_{L,V}$ is semistable.
\end{conjecture}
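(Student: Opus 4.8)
The plan is to recast the question in the language of coherent systems, where the parameter $\alpha$ governing stability can be varied, and then to use wall-crossing. Dualising $(\ref{eq0})$ gives the exact sequence
\begin{equation*}
0\ra L^{-1}\ra V^*\otimes\cO_C\ra M^*_{L,V}\ra0,
\end{equation*}
so $M^*_{L,V}$ is globally generated; since $L$ is generated and $\dim V=n+1\ge3$ forces $d>0$, we have $H^0(L^{-1})=0$, whence $V^*\hra H^0(M^*_{L,V})$ and $(M^*_{L,V},V^*)$ is a coherent system of type $(n,d,n+1)$. As $M_{L,V}$ is semistable exactly when $M^*_{L,V}$ is, the first step is to establish the two facts on which everything rests: (i) if $(M^*_{L,V},V^*)$ is $\alpha$-semistable for all small $\alpha>0$, then $M^*_{L,V}$ is semistable (the subsystem inequalities become the bundle inequalities as $\alpha\to0^+$); and (ii) via the kernel construction, a general generated linear series $(L,V)$ of type $(d,n+1)$ yields a system $(M^*_{L,V},V^*)$ which is $\alpha$-stable for $\alpha$ large (this is available precisely because $V^*$ generates $M^*_{L,V}$, so no proper subbundle can carry all the sections). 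Together these reduce the conjecture to showing that $\alpha$-stability for large $\alpha$ survives, generically, all the way down to $\alpha=0^+$.

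The second step is to describe the moduli space $G(\alpha)$ of $\alpha$-stable coherent systems of type $(n,d,n+1)$ and its two extremes. For $\alpha$ large I would identify the relevant locus of $G(\alpha)$, through the correspondence $(L,V)\mapsto(M^*_{L,V},V^*)$, with the space of generated linear series of type $(d,n+1)$. On a general (Petri) curve this is governed by classical Brill--Noether theory: provided $\rho=g-(n+1)(g-d+n)\ge0$ --- the only case in which the conjecture is not vacuous --- this space is non-empty, and the Petri condition makes it smooth of the expected dimension, so that the general $(L,V)$ is the generic point of an irreducible parameter space. This is what lets me replace ``a general choice of $(L,V)$'' by ``the generic point of $G(\alpha)$ for $\alpha$ large''.

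The heart of the argument is the third step, the wall-crossing from $\alpha=\infty$ down to $\alpha=0^+$. In this range there are only finitely many walls, each determined by a possible sub- or quotient system of type $(n',d',k')$, and at each wall the systems that cease to be semistable form a locus described by a wall-crossing formula in terms of extensions between coherent systems of the two smaller types. For every wall I would bound the dimension of the family of systems $(M^*_{L,V},V^*)$ acquiring a destabilising subsystem there and show that, on a general curve, this family has strictly positive codimension in $G(\alpha)$; an induction on $n$ (or on $d$), feeding the conjecture for smaller invariants into these estimates, is the natural engine for such a bound. Granting the estimates at every wall, the generic system stays $\alpha$-semistable down to $\alpha=0^+$, and step (i) then yields semistability of $M_{L,V}$ for a general $(L,V)$ on a general curve; since semistability is an open condition, this is exactly the assertion of Conjecture~\ref{conj1}.

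The hard part will be precisely these dimension estimates at the walls. Each destabilising subsystem is a coherent system of smaller type whose underlying bundle carries prescribed sections, so bounding its contribution is a Brill--Noether estimate for sub-bundles, and it is here that the general-curve hypothesis is indispensable: one must rule out the dangerous possibility that an entire component of $G(\alpha)$, rather than a positive-codimension subvariety, is destabilised at some wall. I expect the walls nearest $\alpha=0$, where the numerical weight of the sections is weakest, together with small $g$ and the boundary values of $d$, to be the cases demanding the most delicate and probably separate treatment.
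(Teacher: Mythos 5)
Your framework is the right one and matches the paper's technical engine: the dual span $(M_{L,V}^*,V^*)$ is a generated coherent system of type $(n,d,n+1)$, it is $\alpha$-stable for large $\alpha$ because every proper subsystem has $k'\le r'$, the large-$\alpha$ moduli space $G_L(n,d,n+1)$ is identified on a Petri curve with the (irreducible, expected-dimensional) Brill--Noether variety of linear series, and the problem becomes a codimension estimate for the locus where the underlying bundle fails to be (semi)stable. This is exactly what the paper does in Sections 3--4, via the quantities $C_{12}$ and Lemma \ref{lemc12}.

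The genuine gap is in your third step. The wall-crossing dimension estimates are \emph{not} known to hold for all $d$, and the paper does not establish them for all $d$: Theorem \ref{th7} only covers $d< g+n+\frac{(n^2-n-2)g}{2(n-1)^2}$, and the obstruction is concrete. When a destabilising quotient system $(E_2,V_2)$ has $k_2\ge n_2+2$, the only tool available to exclude it is a lower bound on $d_2=\deg E_2$ coming from Brill--Noether/Paranjape--Ramanan estimates (Lemma \ref{lem2}), and these bounds are vacuous once $d/n$ is large; no induction on $n$ or $d$ feeds back into these estimates (the inequalities \eqref{eq11} use only the existence bound for generated systems, not the conjecture for smaller invariants). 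The paper closes the remaining range not by wall-crossing but by importing an external semistability theorem for large $d$ (Proposition \ref{prop1}, a reformulation of Aprodu--Farkas--Ortega, proved by entirely different methods), together with the known cases $g=1,2$ and the range $g+n\le d\le 2n$ for $g\le n$ from \cite{bgmmn}; the proof of Theorem \ref{th5} is precisely the verification that these ranges overlap. As written, your ``granting the estimates at every wall'' assumes away the one step that the coherent-systems method alone has never delivered, so the proposal does not constitute a proof without a substitute for that external input.
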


There are many variants of Conjecture \ref{conj1}, of which we consider
the following two (see section \ref{back} for the definition of a \emph{Petri} curve).
\begin{conjecture}\label{conj2}For $C$ a general curve of genus $g\ge3$
and a general choice of $(L,V)$, the bundle $M_{L,V}$ is stable.
\end{conjecture}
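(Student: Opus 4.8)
The plan is to pass to the dual coherent system and then run a wall-crossing argument in the theory of coherent systems. Dualizing $(\ref{eq0})$ gives
$$0\ra L^*\ra V^*\otimes\cO_C\ra M_{L,V}^*\ra0,$$
so that $E:=M_{L,V}^*$ is a bundle of rank $n$ and degree $d$ generated by the image of $V^*$; since $d>0$ we have $H^0(L^*)=0$, whence $V^*\hra H^0(E)$ and $(E,V^*)$ is a generated coherent system of type $(n,d,n+1)$. As $M_{L,V}$ is stable if and only if $E$ is, and the assignment $(L,V)\mapsto(E,V^*)$ is Butler's duality, setting up a bijection between (generic) linear series of type $(d,n+1)$ and generated coherent systems of type $(n,d,n+1)$, the conjecture is equivalent to the statement that, for a general Petri curve and a general such coherent system, $E$ is stable. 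The expected dimension $\beta(n,d,n+1)=n^2(g-1)+1-(n+1)(n+1-d+n(g-1))$ of the moduli of these coherent systems equals the Brill--Noether number $\rho=g-(n+1)(g-d+n)$ of the variety $G^n_d$ of linear series, so the two parameter spaces match.

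Next I would relate stability of $E$ to $\alpha$-stability of $(E,V^*)$, where for a coherent system of type $(n',d',k')$ one sets $\mu_\alpha=d'/n'+\alpha k'/n'$. There is a finite set of critical values $0=\alpha_0<\alpha_1<\dots<\alpha_L$ on whose complementary intervals $\alpha$-stability is constant. A short slope computation shows that on $(0,\alpha_1)$ (``$0^+$-stability''): if $(E,V^*)$ is $0^+$-stable then $E$ is semistable, and conversely $E$ stable implies $(E,V^*)$ is $0^+$-stable. The gap between Conjectures \ref{conj1} and \ref{conj2} is exactly the possible presence of a saturated subbundle $E'\subset E$ with $\mu(E')=\mu(E)$ and $\dim(V^*\cap H^0(E'))/\rk E'<(n+1)/n$: such an $E'$ is compatible with $0^+$-stability but destroys stability of $E$.

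The core of the argument is a wall-crossing in $\alpha$ anchored at a regime governed by Brill--Noether theory. For $\alpha$ large the $\alpha$-stable coherent systems of type $(n,d,n+1)$ are (generically) generated and, via the duality above together with the Gieseker--Petri theorem, the corresponding moduli space is smooth, irreducible and of the expected dimension $\rho$, dominating $G^n_d$. I would then cross the walls $\alpha_L,\dots,\alpha_1$ one at a time. At each wall the flip locus, where $\alpha$-stability changes, is stratified by the numerical type $(n',d',k')$ of the destabilizing sub- and quotient systems; using the Petri condition to bound the Brill--Noether loci of these smaller types, I would show that every flip locus has dimension strictly less than $\rho$. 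Hence the generic member of the large-$\alpha$ component stays $\alpha$-stable throughout $(0,\alpha_L)$, in particular is $0^+$-stable, so its bundle $E$ is semistable; since stability is open and $G^n_d$ is irreducible for $\rho\ge1$, this gives Conjecture \ref{conj1}. For Conjecture \ref{conj2} I would show, by the same kind of count, that the family of coherent systems admitting a slope-preserving subbundle $E'$ as above has dimension $<\rho$; here the hypothesis $g\ge3$ enters, guaranteeing the strict Brill--Noether inequalities needed to exclude the borderline equalities that genuinely occur in low genus.

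The main obstacle is precisely this wall-crossing dimension count: one must enumerate all numerical types $(n',d',k')$ of sub- and quotient coherent systems that can define a wall for type $(n,d,n+1)$, and for each produce a Petri-based estimate keeping the associated flip locus below dimension $\rho$. The delicate cases are the extremal numerical types, where the expected dimension of a flip locus approaches $\rho$ and one needs sharp Clifford-type inequalities (together with the genus bound) to keep the strict inequality; tracking the generated versus non-generated loci across each wall, and the passage from semistability to stability, is the technical heart of the proof.
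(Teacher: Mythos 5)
The statement you are addressing is a \emph{conjecture}, and the paper does not prove it; it remains open. The paper proves Conjecture \ref{conj1} (semistability) in full, but for stability it obtains only partial results: Theorem \ref{th6} handles $g\ge 2n-4$ with $n\ge5$, Corollaries \ref{th1}, \ref{th4} and \ref{cornew} settle $n=5$ and $n=7$ but leave explicit open cases for $n=6$ (for instance $g=3$, $d=14$), and Theorem \ref{prime2} still lists possible exceptions for prime $n\ge11$. Your proposal correctly identifies the framework the authors use --- dual span, coherent systems of type $(n,d,n+1)$, the equivalence of stability of $M_{L,V}$ with $U(n,d,n+1)\ne\emptyset$, and dimension counts on the locus of $(E,W)\in G_L(n,d,n+1)$ with $E$ not stable, driven by the wall-crossing quantities $C_{12}$ --- but the entire mathematical content is deferred to the sentence ``I would show that every flip locus has dimension strictly less than $\rho$.'' That is precisely the step that is not known to hold in general, so the proposal is a research plan rather than a proof.

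Concretely, the paper's estimates (Lemmas \ref{lem1}, \ref{lem2} and \ref{lem4}) only close up when $d<g+n+\frac{(n^2-n-2)g}{2(n-1)^2}$ (Theorem \ref{th7}); the obstruction comes from destabilizing quotient systems $(E_2,V_2)$ with $k_2\ge n_2+2$, where the Paranjape--Ramanan and Clifford-type lower bounds on $d_2$ are no longer strong enough once $d$ is large relative to $g$ and $n$. For $g$ small compared with $n$ (roughly $n+2\le g\le 2n-6$) and $d$ in an intermediate range, no argument is currently available even after combining with Propositions \ref{prop1} and \ref{prop2} and Lemma \ref{lem5}; this is exactly why the exception lists in section \ref{n=5} are nonempty. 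Any genuine proof of Conjecture \ref{conj2} must supply new estimates for precisely those numerical ranges, and your sketch gives no indication of how to do so.
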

\begin{conjecture}\label{conj3}
For $C$ a Petri curve of genus $g\ge3$ and a general choice of $(L,V)$, the bundle $M_{L,V}$ is stable.
\end{conjecture}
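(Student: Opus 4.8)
The plan is to pass to coherent systems and argue by wall-crossing, as the paper suggests. Dualising $(\ref{eq0})$ gives
$$0\ra L^*\ra V^*\otimes\cO_C\ra M_{L,V}^*\ra0,$$
so that $(M_{L,V}^*,V^*)$ is a coherent system of type $(n,d,n+1)$ — the dual span of $(L,V)$ — where $d>0$ ensures $V^*\hra H^0(M_{L,V}^*)$. Since $M_{L,V}$ is stable iff $M_{L,V}^*$ is, and since for $\alpha>0$ small $\alpha$-stability of $(M_{L,V}^*,V^*)$ refines semistability of the underlying bundle, it suffices to show that for general $(L,V)$ the dual span is $\alpha$-stable for all small $\alpha$ and that no proper subbundle of $M_{L,V}^*$ attains the slope $d/n$. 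I would first establish the destabilisation dictionary: a destabilising subobject of $M_{L,V}$ may be taken to arise from a subspace $W\subseteq V$ generating a line subbundle $L_W\subseteq L$ of degree $d_W$, giving $M_{L_W,W}\subseteq M_{L,V}$ of rank $\dim W-1$ and degree $-d_W$. The target then reduces to the purely linear-series statement that, for general $(L,V)$, every $W$ with $2\le\dim W\le n$ satisfies $n\,d_W>(\dim W-1)\,d$.

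Next I would organise this as a wall-crossing over $\alpha\in(0,\infty)$ for the type $(n,d,n+1)$. Starting from $\alpha\gg0$, the dual spans of general generated linear series are $\alpha$-stable and sweep out a component of $G(\alpha;n,d,n+1)$ of the expected Brill--Noether dimension; equivalently, on the linear-series side one uses that $G^n_d(C)$ is smooth of dimension $\rho(g,n,d)=g-(n+1)(g-d+n)$, which is where the Petri hypothesis first enters, through the Gieseker--Petri theorem. Decreasing $\alpha$, at each of the finitely many walls $\alpha_i$ the systems that fail to remain stable are those acquiring a sub- or quotient system of a fixed numerical type whose $\alpha_i$-slope equals that of the whole; these form a flip locus. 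Writing such a system as an extension of one coherent system by another of complementary type, I would bound the dimension of the flip locus by the dimensions of the two associated Brill--Noether loci together with $\dim\Ext^1$ between them. Injectivity of the Petri map forces all of these loci to have their expected dimensions, so that each flip locus has codimension at least one in the family of dual spans.

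Summing over the finitely many walls and admissible types shows that the union of the flip loci is a proper closed subset of the family; the general $(L,V)$ avoids it, its dual span therefore remains $\alpha$-stable as $\alpha\to0^+$, and $M_{L,V}^*$ — hence $M_{L,V}$ — is stable. The main obstacle is precisely this dimension estimate, i.e. proving that the inequality $n\,d_W>(\dim W-1)\,d$ can fail only in codimension $\ge1$. I expect the extreme types to be the hard ones: sub-pencils ($\dim W=2$), where the relevant bound is governed by the gonality and by $\rho(g,1,d_W)$, and sub-series with $\dim W$ close to $n$ (dually, quotient pencils). For these boundary types one must verify that the numerical constraints forced by non-negativity of the various Brill--Noether numbers are genuinely incompatible with a destabilising $W$ for the general member, and it is here — rather than for mere generality of the curve — that the Petri condition must be used most delicately, and where I anticipate the bulk of the work.
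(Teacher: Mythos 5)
This statement is a \emph{conjecture}, and the paper does not prove it in full: it establishes Conjecture \ref{conj3} only under additional numerical hypotheses (Theorem \ref{th7} for $d$ in the range \eqref{eq32}, Theorem \ref{th6} for $g\ge 2n-4$, the case $n\le4$ from \cite{bbn}, etc.) and explicitly records unresolved cases even for $n=5,6,7$ (Remarks \ref{rem2} and \ref{rem3}, Corollaries \ref{th1}, \ref{th4}, \ref{cornew}). Your outline follows the same broad strategy as the paper --- pass to the dual span $(M_{L,V}^*,V^*)\in G_L(n,d,n+1)$, use the Petri hypothesis to get the expected dimension $\beta(n,d,n+1)$, and bound the dimension of the locus of non-stable extensions \eqref{eq8} --- but the step you defer as ``the bulk of the work'' is precisely where the argument is not known to close. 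The assertion that ``injectivity of the Petri map forces all of these loci to have their expected dimensions, so that each flip locus has codimension at least one'' is not true in the required generality: the codimension estimate (essentially \eqref{eq29}, $C_{12}>h^0(E_1^*\otimes N_2\otimes K)$, together with the non-existence statements of Lemmas \ref{lem1} and \ref{lem2}) is established in the paper only under hypotheses such as $d<g+n+\frac{(n^2-n-2)g}{2(n-1)^2}$, and it is exactly the failure of these bounds for certain $(g,n,d)$ that leaves the conjecture open. So the proposal has a genuine gap at its central step, not merely an omitted routine verification.

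A second, more structural problem is your ``destabilisation dictionary''. A destabilising subbundle $S\subset M_{L,V}$ need not be of the form $M_{L_W,W}$ for a subspace $W\subseteq V$: taking $W$ minimal with $S\subset W\otimes\cO_C$ only gives $S\subseteq M_{L_W,W}$, with possibly smaller rank, so the reduction to the inequality $n\,d_W>(\dim W-1)\,d$ does not capture all destabilising subsheaves. The paper instead works on the dual side: a quotient coherent system $(E_2,V_2)$ of the dual span is generated with $h^0(E_2^*)=0$, hence $k_2\ge n_2+1$; the case $k_2=n_2+1$ does correspond to subbundles of the form $M_{L',W'}$, but the case $k_2\ge n_2+2$ is a genuinely separate one, handled in Lemma \ref{lem2} via the Paranjape--Ramanan bound \cite[Lemma 3.9]{pr}, and your dictionary omits it. To make progress you would need either to prove the missing codimension estimates in the excluded ranges (which would settle the listed open cases) or to restrict the claim to the ranges where the paper's lemmas apply.
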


Conjectures \ref{conj2} and \ref{conj3} can fail for $g\le2$. For
$g=0,1$, this is obvious; if $g=0$, there are no stable bundles of
rank $n\ge2$, while, for $g=1$, the same is true if $\gcd(n,d)>1$.
For $g=2$, see \cite[Th\'eor\`eme 2]{mer2} and \cite[Theorem
8.2]{bbn}. For all $g$, a sharp lower bound for the existence of
$(L,V)$ of type $(d,n+1)$ on a Petri curve is $d\ge
g+n-\left\lfloor\frac{g}{n+1}\right\rfloor$. 

Much work has been done on these conjectures, although none of
them has been completely solved \cite{mer1, bh, le, bbn, bo, mis, afo}.  Techniques used include the use
of deformations, classical Brill-Noether theory and coherent
systems (see section \ref{back} for details of some of the results
obtained). Our object in this paper is to put together the most
far reaching of these results and to extend the techniques (based on wall-crossing formulae for coherent systems) used in
our previous paper \cite{bbn}. This allows us to complete the proof of Conjecture \ref{conj1} and to prove stability in
more cases.

In section \ref{back}, we recall some definitions and state some
known results. In section \ref{coh}, we review the properties of coherent systems which we require for our proofs. Section \ref{conj} contains our key theorem, which extends the range of values for which Conjecture \ref{conj3} is known and forms the basis for our results on the conjectures. We start with some lemmas which improve on some of those in \cite[Section 6]{bbn} and which combine to give the following theorem.

\noindent{\bf Theorem \ref{th7}.}\begin{em}
Suppose that $C$ is a Petri curve, $g\ge2$, $n\ge2$ and
\begin{equation}\label{eq31}
g+n-\left\lfloor\frac{g}{n+1}\right\rfloor\le d<g+n+\frac{(n^2-n-2)g}{2(n-1)^2}.
\end{equation}
Then, for the general linear series $(L,V)$ of type $(d,n+1)$, $M_{L,V}$ is stable. When $g$ is odd, the inequality $<$ in the hypothesis can be replaced by $\le$.
\end{em}

In section \ref{butler}, we prove our main theorem (Butler's Conjecture):

\noindent{\bf Theorem \ref{th5}.}\begin{em}
Let $C$ be a general curve of genus $g\ge1$ and $(L,V)$ a general linear series of type $(d,n+1)$. Then $M_{L,V}$ is semistable.\end{em}

\noindent We also make some deductions concerning Conjecture \ref{conj2}.

It is proved in \cite{bbn} that, if $n\le4$, Conjecture \ref{conj3} is true for all allowable $(g,d,n)$. In section \ref{n=5}, we use the results in section \ref{conj} to prove the following theorem, which is a substantial improvement on the previously known result that Conjecture \ref{conj3} (hence also Conjecture \ref{conj2}) holds for $g\ge n^2-1$.

\noindent{\bf Theorem \ref{th6}.}\begin{em}
Let $C$ be a Petri curve of genus $g\ge3$, $n\ge5$ and $g\ge2n-4$. Suppose that $(L,V)$ is a general linear series of type $(d,n+1)$. Then $M_{L,V}$ is stable.\end{em}

As corollaries of this theorem (Corollaries \ref{th1}, \ref{th4} and \ref{cornew}), we present  short lists of possible exceptions to Conjecture \ref{conj3} for $n=5,6,7$, which enable us to prove Conjecture \ref{conj2} completely for $n=5$ and $n=7$. Finally, we obtain new information about Conjectures \ref{conj3} and \ref{conj2}, proving in particular

\noindent{\bf Theorem \ref{prime2}.}\begin{em}
Let $n$ be a prime number, $C$ a general curve of genus $g\ge3$ and $(L,V)$ a general linear series of type $(d,n+1)$. Then $M_{L,V}$ is stable except possibly in the following cases: 
\begin{itemize}
\item $n\ge11$, $n+2 \le g\le\min\{n+4, g_n\}$, $d=3n$

\item  $n\ge17$, $n+5\le g\le g_n$, $d=3n,4n$.
\end{itemize}
\end{em}

\noindent Here $g_n=\frac{4(n-1)^2}{3n-5}$ (see Remark \ref{gn}). 

We thank the referee for a careful reading of the paper and a couple of useful suggestions.

\section{Background}\label{back}
We suppose always that $C$ is a smooth projective curve of genus
$g\ge1$ defined over the complex numbers with canonical bundle $K$.
When we say that $C$ is \emph{general}, we mean that $C$ lies in
some unspecified non-empty Zariski-open subset of the moduli space
of curves of genus $g$. The curve $C$ is a \emph{Petri} curve if
the multiplication map
$$H^0(L)\otimes H^0(K\otimes L^*)\to H^0(K)$$
is injective for every line bundle $L$ on $C$. It is a standard
fact that Petri curves do define a non-empty Zariski-open subset
of the moduli space, so any result which is valid for Petri curves of genus $g$ is also valid for a general curve of genus $g$.
\begin{remark}\label{rem1}\begin{em}
It is known that, on a Petri curve of genus $g$, a linear series $(L,V)$ of
type $(d,n+1)$ exists if and only if
$$d\ge g+n-\left\lfloor\frac{g}{n+1}\right\rfloor.$$
Moreover, Conjecture \ref{conj3} holds whenever $g\ge3$ and
\begin{equation}\label{eq1}
g+n-\left\lfloor\frac{g}{n+1}\right\rfloor\le d\le g+n
\end{equation}
and also for any $d \ge g+n-\left\lfloor\frac{g}{n+1}\right\rfloor$ when $g\ge n^2-1$ (see \cite{bu,le}).
\end{em}\end{remark}

Butler stated his conjecture in terms of coherent systems, which will play a central r\^ole in this paper. For the moment, we simply give the definitions and enough detail to state the general form of Butler's conjecture. We will give more details in section \ref{coh}. 

A coherent system $(E,V)$ of type
$(r,d,k)$ on $C$ is a pair consisting of a vector bundle $E$ of
rank $r$ and degree $d$ and a linear subspace $V\subset H^0(E)$ of
dimension $k$. The coherent system $(E,V)$ is said to be
\emph{generated} if  the evaluation map $V\otimes\cO\to E$ is
surjective. For any real
number $\alpha$, a coherent system $(E,V)$ of type $(r,d,k)$ is $\alpha$-{\it stable} if, for any proper coherent subsystem $(E',V')$ of type $(r',d',k')$ of $(E,V)$, 
$$\frac{d'+\alpha k'}{r'}<\frac{d+\alpha k}r.$$
There exist moduli spaces $G(\alpha;
r,d,k)$ of $\alpha$-stable coherent systems of type $(r,d,k)$; if $k>0$, a
necessary condition for the non-emptiness of $G(\alpha;r,d,k)$ is
that $\alpha>0$. There are finitely many critical values
$0=\alpha_0<\alpha_1<\cdots<\alpha_L$ of $\alpha$; as $\alpha$
varies, the concept of $\alpha$-stability remains constant between
two consecutive critical values. We denote by $G_0(r,d,k)$ (resp.
$G_L(r,d,k)$) the moduli spaces corresponding to
$0<\alpha<\alpha_1$ (resp. $\alpha>\alpha_L$). 
It is easy to see that, if $(E,V)\in G_0(r,d,k)$, 
then $E$ is semistable; moreover, if $E$ is stable, then, 
for any linear subspace $V$ of dimension $k,$  $(E,V)\in G_0(r,d,k)$. 

For any generated coherent system $(E,V)$, we have an exact sequence, analogous to \eqref{eq0}:
\begin{equation}\label{eq21}
0\ra M_{E,V}\ra V\otimes\cO\ra E\ra0.
\end{equation}
Suppose that $h^0(E^*)=0$. Dualising \eqref{eq21}, we obtain a new generated coherent
system $D(E,V):=(M_{E,V}^*,V^*)$, called the \emph{dual span} of $(E,V)$.

Let $S_\alpha(r,d,k)$ denote the subset of $G_\alpha(r,d,k)$ consisting of generated $\alpha$-stable coherent systems, endowed with its natural structure as an open subscheme of $G_\alpha(r,d,k)$. Note that, if $(E,V)\in S_0(r,d,k)$, then $E$ is semistable, so $h^0(E^*)=0$ and $D(E,V)$ exists.
The original form of Butler's conjecture can now be stated as follows.

\begin{conjecture}\label{conj4} \cite[Conjecture 2]{bu} Let $C$ be a general
curve of genus $g\ge3$ and let $(E,V)$ be a general element of $S_0(r,d,r+n)$. Then $D(E,V)\in S_0(n,d,r+n)$ and the formula $(E,V)\mapsto D(E,V)$ defines a birational map $S_0(r,d,r+n)\  - \rightarrow S_0(n,d,r+n)$.
\end{conjecture}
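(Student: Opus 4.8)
The plan is to use the fact that the dual span $D$ is an involution and thereby reduce the whole statement to the semistability of the kernel bundle. First I would dualise the defining sequence of a generated coherent system $(E,V)$ with $h^0(E^*)=0$,
\[
0\ra M_{E,V}\ra V\otimes\cO\ra E\ra0,
\]
to obtain
\[
0\ra E^*\ra V^*\otimes\cO\ra M_{E,V}^*\ra0.
\]
This shows simultaneously that $D(E,V)=(M_{E,V}^*,V^*)$ is generated, that $V^*\hra H^0(M_{E,V}^*)$ (a section in the kernel of $V^*\to H^0(M_{E,V}^*)$ would lie in $H^0(E^*)=0$), and that the kernel of the evaluation map of $D(E,V)$ is $E^*$. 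Since also $h^0(M_{E,V})=0$ (from $V\hra H^0(E)$), $D$ may be applied again and $D(D(E,V))\simeq(E,V)$: on the locus of generated semistable coherent systems $D$ is an involution. Consequently the birationality assertion reduces to showing that $D$ carries a dense open subset of $S_0(r,d,r+n)$ into $S_0(n,d,r+n)$; by the symmetry $r\leftrightarrow n$ the same holds with $r$ and $n$ interchanged, and the involution then furnishes the birational inverse.

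Second, I would reduce membership $D(E,V)\in S_0(n,d,r+n)$ to a statement about the bundle $M_{E,V}$ alone. As $D(E,V)$ is generated and $V^*\hra H^0(M_{E,V}^*)$, it lies in $S_0(n,d,r+n)$ once it is $\alpha$-stable for small $\alpha>0$; and, as recalled in the excerpt, if $M_{E,V}^*$ is \emph{stable} then $(M_{E,V}^*,V^*)\in G_0(n,d,r+n)$ for every subspace $V^*$, so generatedness already places it in $S_0(n,d,r+n)$. Thus everything comes down to proving that, for the general element $(E,V)$, the kernel $M_{E,V}$ is semistable, and generically stable.

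For this decisive step the case $r=1$ is exactly Theorem \ref{th5} (Conjecture \ref{conj1}): for a general line series $(L,V)$ of type $(1,d,n+1)$, $M_{L,V}$ is semistable, and in the ranges of Theorems \ref{th7} and \ref{th6} even stable, so that there $D$ lands in the stable locus. The dual direction $n=1$ is automatic, $M_{E,V}$ then being a line bundle. For general $r$ I would reduce to the line bundle case by dévissage: realise a general stable $E$ as an iterated extension of line bundles of balanced degree, with $V$ inducing compatible generating subspaces, so that $M_{E,V}$ acquires a filtration whose graded pieces are kernels $M_{L_i,V_i}$ of line series; semistability of these pieces (Theorem \ref{th5}), a slope computation, and openness of semistability in families then yield semistability of $M_{E,V}$ for the generic member. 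The coherent-systems wall-crossing apparatus of section \ref{coh} serves both to describe the destabilising subsystems that can occur and to propagate stability from $G_L(n,d,r+n)$ down to $\alpha=0$.

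The main obstacle is precisely this higher-rank semistability. One cannot degenerate inside $S_0(r,d,r+n)$ to a direct sum of line series, since such a sum is unstable, so the dévissage must proceed through genuine extensions with finely balanced invariants, and destabilising subbundles of $M_{E,V}^*$ must be excluded uniformly along the family. Translating such subbundles into quotient coherent systems and controlling them as $\alpha$ crosses the critical values — where the Petri hypothesis and the numerical constraints \eqref{eq1} and \eqref{eq31} come into play — is where the genuine work lies; once the involution is in place, the remainder is formal.
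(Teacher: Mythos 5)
You should first be aware that the statement you were asked to prove is stated in the paper as a \emph{conjecture} and is not proved there: the authors establish only the case $r=1$, by showing (Remark \ref{rem6}) that this case is equivalent to Conjecture \ref{conj1} and then proving the latter as Theorem \ref{th5}. Your opening two reductions are correct and essentially reproduce the paper's Remark \ref{rem6}: dualising \eqref{eq21} shows $D(E,V)$ is generated with $V^*\hra H^0(M_{E,V}^*)$ and kernel $E^*$; $h^0(M_{E,V})=0$ makes $D$ an involution on the generated locus and forces every proper subsystem of $D(E,V)$ to have $k'\le r'$, so that semistability of $M_{E,V}^*$ already yields $\alpha$-stability of $D(E,V)$ for all $\alpha>0$. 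Up to this point your argument is sound and, for $r=1$, the decisive input is exactly Theorem \ref{th5}.

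The genuine gap is your third step, the d\'evissage for $r\ge2$, which is precisely the part of the conjecture that remains open. Concretely: (a) a filtration of a stable $E$ by line bundles does not induce generating subspaces $V_i\subset H^0(L_i)$ with $\sum(\dim V_i-1)=n$ in any controlled way -- the sub-system $(L_1,V\cap H^0(L_1))$ may have $V\cap H^0(L_1)=0$, and then $M_{E,V}$ has no filtration with graded pieces of the form $M_{L_i,V_i}$; (b) even when such a compatible splitting exists, the graded pieces $M_{L_i,V_i}$ have slopes $-d_i/(\dim V_i-1)$, and requiring these all to equal $-d/n$ is an integrality condition that fails for general $(d,n,r)$, so semistability of the pieces does not give semistability of the extension; and (c) the special elements you would construct this way are iterated extensions of rank-one systems, hence lie on the boundary of (or outside) $S_0(r,d,r+n)$, so openness of semistability in families does not let you conclude for the general element of $S_0(r,d,r+n)$ without first showing your special locus meets every component of the relevant family -- this is the same obstruction you yourself flag when you note one cannot degenerate to a direct sum. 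In short, the formal framework you set up is correct and agrees with the paper, but the higher-rank semistability of $M_{E,V}$ is not reduced to the line-bundle case by your argument, and no proof of it is known; only the case $r=1$ can be regarded as established.
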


\begin{remark}\label{gen}\begin{em}
By ``general element'', Butler means that $(E,V)$ belongs to some Zariski-open subset of $S_0(r,d,r+n)$ which is dense in every irreducible component. In our case, on any Petri curve (hence also on a general curve), the variety $G(1,d,n+1)$ of linear systems of type $(d,n+1)$ is smooth and irreducible by classical Brill-Noether theory whenever $d>g+n-\frac{g}{n+1}$, and the general element of this variety is generated. When $d=g+n-\frac{g}{n+1}$, there are finitely many linear systems  of the given type (the number is given by a formula of Castelnuovo (see \cite[Chap. V, formula (1.2)]{acgh}) and all are generated. In the latter case, we regard all $(L,V)$ as being general; this presents no problem, since also all $M_{L,V}$ are stable (see \cite[Theorem 2]{bu}, \cite{le}) and the formula $(L,V)\mapsto M_{L,V}^*$ defines a bijection from $G\left(1,g+n-\frac{g}{n+1},n+1\right)$ to 
$$\left\{E\left| E\mbox{ stable},\rk E=n,\deg E=g+n-\frac{g}{n+1}, h^0(E)=n+1\right.\right\}.$$
\end{em}\end{remark}

\begin{remark}\label{rem6}\begin{em}
The case $r=1$ of Conjecture \ref{conj4} is
equivalent to Conjecture \ref{conj1}. In the first place, $(L,V)$ is
$\alpha$-stable for all $\alpha>0$ and, by Remark \ref{gen}, the general $(L,V)$ is generated. Moreover,  if $D(L,V)\in G_0(n,d,n+1)$, then $M_{L,V}^*$ is semistable (hence
also $M_{L,V}$). Conversely, noting that $D(L,V)$ is generated and $h^0(M_{L,V})=0$, we see that any proper subsystem of $D(L,V)$ of type $(r',d',k')$ has $k'\le r'$. Hence, if $M_{L,V}^*$ is semistable, then $D(L,V)\in G(\alpha;n,d,n+1)$ for all $\alpha$ and in particular $D(L,V)\in S_0(n,d,n+1)$. Finally, if $(E,W)\in S_0(n,d,n+1)$, then we have an exact sequence
$$0\to (\det E)^*\to W\otimes\cO_C\to E\to0;$$
dualising and noting again that $h^0(E^*)=0$, we obtain $(E,W)=D(\det E,W^*)$.
\end{em}\end{remark}

We turn now to known results on Conjectures \ref{conj1} and
\ref{conj2}. A recent preprint \cite{afo} addresses Conjecture
\ref{conj1}. The following is a reformulation of \cite[Theorem 1.8]{afo}.
\begin{proposition}\label{prop1} Let $C$ be a
general curve of genus $g\ge1$ and $(L,V)$ a general linear series
of type $(d,n+1)$. Suppose $g=ns+t$ with $0\le t\le n-1$. If
\begin{equation}\label{eq2}
d\ge \max\{g+n+\min\{n-t,t-2\},g+n\},
\end{equation}
then $M_{L,V}$ is semistable.
\end{proposition}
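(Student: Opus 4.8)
The plan is to reduce the semistability of $M_{L,V}$ to the non-existence, on the general curve, of a destabilizing sub-linear-series of $(L,V)$, and then to rule this out by a Brill--Noether dimension count. First I would argue by contradiction: if $M_{L,V}$ fails to be semistable, take its maximal destabilizing subbundle $N\subset M_{L,V}$, so $\mu(N)>\mu(M_{L,V})=-d/n$. Since $N\subset M_{L,V}\subset V\otimes\cO_C$, let $W\subseteq V$ be the span of the fibres of $N$, of dimension $w'$, and let $L'=L(-D)$ be the subsheaf of $L$ generated by the image of $W$ under evaluation, of degree $d'$. Then $(L',W)$ is a generated sub-linear-series of $(L,V)$ with $N\subseteq M_{L',W}$ and $\rk M_{L',W}=w'-1$. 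A standard argument (as in \cite{bu}) then shows that $M_{L,V}$ is non-semistable if and only if there is such a $(L',W)$ with $2\le w'\le n$ satisfying the numerical destabilizing inequality $n\,d'<d\,(w'-1)$. Thus it suffices to prove that, for the general $(L,V)$ of type $(d,n+1)$, no such $(L',W)$ exists.

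For the second step I would estimate the dimension of the locus in $G(1,d,n+1)$ of linear series admitting a destabilizing sub-series. A configuration $(L,V)\supset(L',W)$ is built by choosing $L'$ in $W^{w'-1}_{d'}(C)$ (of dimension $\rho(g,d',w'-1)=g-w'(g-d'+w'-1)$ when nonnegative), an effective divisor $D$ of degree $d-d'$ with $L=L'(D)$, the subspace $W\subseteq H^0(L')$, and a completion of $W$ to $V$. Adding these contributions and comparing with $\dim G(1,d,n+1)=\rho(g,d,n)$ shows that the destabilizing locus is a proper subvariety, hence avoided by the general $(L,V)$, precisely when the destabilizing constraint $n\,d'<d\,(w'-1)$ forces the count to drop; in the cleanest cases this is simply $\rho(g,d',w'-1)<0$. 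Writing $d'\le\lfloor(d(w'-1)-1)/n\rfloor$ and $g=ns+t$, this reduces to an inequality in $d$; optimizing the resulting threshold over $w'\in\{2,\dots,n\}$ — the two regimes corresponding to how $g/w'$ rounds — is what produces the constant $\min\{n-t,t-2\}$, while the outer $\max\{\,\cdot\,,\,g+n\}$ handles the cases $t\in\{0,1\}$, where $t-2<0$ and $d\ge g+n$ already guarantees semistability by the known results in that range.

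The hard part will be the genericity bookkeeping in the second step. The point is that $L'=L(-D)$ is not an independent general line bundle but a twist-down of the general $L$, so one cannot apply Brill--Noether existence to $(L',W)$ in isolation; the honest object is the incidence variety of quadruples $(L,V,L',W)$, and one must show that its projection to $G(1,d,n+1)$ is not dominant. This is exactly where the Petri hypothesis enters, since injectivity of the multiplication map $H^0(L')\otimes H^0(K\otimes L'^{\ast})\to H^0(K)$ controls the tangent space to the sub-series locus and hence pins down its dimension. A cleaner alternative, which makes the $g=ns+t$ bookkeeping transparent, is to degenerate $C$ to a chain of $s$ elliptic curves (with the remainder $t$ concentrated on one component), construct the limit of $M_{L,V}$ from the restrictions to the components, and verify the slope inequality component-by-component, semistability following by openness. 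In either route the delicate points are the boundary cases of \eqref{eq2} and confirming that the optimization over $w'$ genuinely yields the stated constant rather than a weaker bound.
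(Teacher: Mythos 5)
Your proposal diverges completely from the paper's proof, which is essentially a citation: the proposition is introduced as a reformulation of \cite[Theorem 1.8]{afo}, and the argument given consists only of verifying that \eqref{eq2} implies the hypothesis $d+n\left\lfloor\frac{d}{n}\right\rfloor\ge 2g+2n-2$ of that theorem (writing $d=g+n+a$ with $0\le a\le n-1$ and $g=ns+t$, this reduces to $a+n\left\lfloor\frac{t+a}{n}\right\rfloor\ge t-2$, which is exactly what \eqref{eq2} guarantees, the range $d\ge g+2n$ being immediate by monotonicity). Attempting a direct proof is legitimate, but your first step has a genuine gap. The equivalence you assert --- that $M_{L,V}$ is non-semistable if and only if some generated sub-series $(L',W)\subset(L,V)$ satisfies $nd'<d(w'-1)$ --- is standard in only one direction: semistability of $M_{L,V}$ forces $d'/(w'-1)\ge d/n$ for every sub-series, since $M_{L',W}\subseteq M_{L,V}$. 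For the direction you actually need, a destabilizing subbundle $N\subseteq M_{L,V}$ does produce $W$ and $L'$ with $N\subseteq M_{L',W}$, but $M_{L',W}$ has rank $w'-1$, which may exceed $\rk N$, and nothing prevents its slope from falling below $-d/n$ even though $\mu(N)>-d/n$. Whether linear semistability of $(L,V)$ implies slope semistability of $M_{L,V}$ is precisely the problem studied in \cite{ms}, where it is settled only under extra hypotheses involving the Clifford index; it is not a ``standard argument''. The rigorous object attached to $N$ is instead the quotient coherent system $(N^*,V_2)$ of the dual span $D(L,V)$, which is generated with $h^0(N)=0$ and yields the bound \eqref{eq11}; that is the route the paper's own machinery takes in sections \ref{coh} and \ref{conj} (for a different theorem).

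Even granting a repaired reduction, your second step is not a proof: the assertion that the Brill--Noether parameter count, optimized over $w'$, ``produces the constant $\min\{n-t,t-2\}$'' is never computed, and it is far from clear that it would. In the paper this constant has nothing to do with an optimization over sub-series; it falls out of the floor function in the inequality $d+n\left\lfloor\frac{d}{n}\right\rfloor\ge 2g+2n-2$ of \cite{afo}. Parameter counts of the kind you sketch are what lie behind the visibly weaker bounds of Proposition \ref{prop2}, so you would need to exhibit the computation in full --- including the incidence-variety bookkeeping you yourself flag as the hard part, and the cases $g=1,2$ included in the statement, which the paper covers separately via \cite[Theorems 8.1 and 8.2]{bbn} --- before the proposal could be accepted.
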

\begin{proof}
By \cite[Theorem 1.8]{afo}, $M_{L,V}$ is semistable whenever
$$d+n\left\lfloor\frac{d}n\right\rfloor\ge2g+2n-2.$$
The left hand side of this inequality is an increasing function of $d$. One can easily check that, if $d=g+2n$, the inequality holds; hence it holds also for $d\ge g+2n$. Suppose now that $d=g+n+a$ with $0\le a\le n-1$ and write $g=ns+t$ as in the statement. The inequality now reduces to
$$a+n\left\lfloor\frac{t+a}n\right\rfloor\ge t-2.$$
If $a\ge n-t$, this inequality holds since $t\le n-1$. If $a<n-t$, the inequality is equivalent to $a\ge t-2$. This completes the proof.
\end{proof}

\begin{remark}\label{g=1,2}\begin{em}
For $g=1, 2$, the conclusion of Proposition \ref{prop1} is already known for any smooth curve and the bound given by \eqref{eq2} is sharp \cite[Theorems 8.1 and 8.2]{bbn}.
\end{em}\end{remark}

\begin{corollary}\label{cor0}  Let $C$ be a
general curve of genus $g\ge1$ and $(L,V)$ a general linear series
of type $(d,n+1)$. Suppose $g=ns+t$ with $s\ge0$ and either $0\le t\le\min\{3,n-1\}$ or $t=n-1$. Then $M_{L,V}$ is semistable.
\end{corollary}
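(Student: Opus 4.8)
The plan is to deduce the corollary from Proposition \ref{prop1} by simplifying the bound \eqref{eq2} according to the residue $t$, and then to fill in the remaining small values of $d$ using the stability results recorded in Remark \ref{rem1}. First I would note, via Remark \ref{gen}, that a general generated $(L,V)$ of type $(d,n+1)$ exists precisely when $d\ge g+n-\left\lfloor\frac{g}{n+1}\right\rfloor$, so it suffices to establish semistability of $M_{L,V}$ for every such $d$.

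Next I would simplify the quantity $\min\{n-t,t-2\}$ occurring in \eqref{eq2} for the admissible residues. For $t\in\{0,1,2\}$ one has $t-2\le 0$, hence $\min\{n-t,t-2\}\le 0$ and the right-hand side of \eqref{eq2} collapses to $g+n$; thus Proposition \ref{prop1} yields semistability for all $d\ge g+n$. For $t=3$ (which forces $n\ge 4$) one computes $\min\{n-3,1\}=1$, and for $t=n-1$ with $n\ge 4$ (the case $n\le 3$ already having $t\le 2$) one computes $\min\{1,n-3\}=1$; in both cases \eqref{eq2} becomes $d\ge g+n+1$, so Proposition \ref{prop1} gives semistability for all $d\ge g+n+1$.

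It then remains to handle the degrees below these thresholds. For this I would invoke Remark \ref{rem1}, which asserts that on a Petri curve with $g\ge 3$ the bundle $M_{L,V}$ is stable — in particular semistable — for every $d$ with $g+n-\left\lfloor\frac{g}{n+1}\right\rfloor\le d\le g+n$. When $t=3$ or $t=n-1$ with $n\ge 4$, one has $g=ns+t\ge 3$ automatically, so this applies, and together with the threshold $d\ge g+n+1$ from Proposition \ref{prop1} it covers all allowable $d$. When $t\le 2$ and $g\ge 3$, Remark \ref{rem1} covers $d\le g+n$ while Proposition \ref{prop1} covers $d\ge g+n$, again leaving no gap.

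Finally I would dispose of $g=1,2$ directly: here $\left\lfloor\frac{g}{n+1}\right\rfloor=0$ and a quick inspection of $g=ns+t$ shows $t\le 2$ in every instance, so the minimal allowable degree is exactly $g+n$ and Proposition \ref{prop1} (valid already for $g\ge 1$) handles the entire range $d\ge g+n$, with no appeal to Remark \ref{rem1} needed. The only substantive point is the bookkeeping: one must verify that the case-by-case simplification of \eqref{eq2} produces thresholds ($g+n$ or $g+n+1$) that dovetail precisely with the upper endpoint $g+n$ of the stability interval in Remark \ref{rem1}, so that the two sources of semistability together exhaust every $d\ge g+n-\left\lfloor\frac{g}{n+1}\right\rfloor$. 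I expect this matching of ranges — rather than any single estimate — to be where care is required.
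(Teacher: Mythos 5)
Your proof is correct and follows essentially the same route as the paper, which simply deduces the corollary from Proposition \ref{prop1} together with Remark \ref{rem1} (covering $g+n-\left\lfloor\frac{g}{n+1}\right\rfloor\le d\le g+n$ for $g\ge3$) and the known $g=1,2$ cases; your case analysis of $\min\{n-t,t-2\}$ and the range-matching is exactly the bookkeeping the paper leaves implicit.
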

\begin{proof} This follows at once from Proposition \ref{prop1} and Remarks \ref{rem1} and \ref{g=1,2}.
\end{proof}

\begin{corollary}\label{corint} Let $C$ be a
general curve of genus $g\ge1$ and $(L,V)$ a general linear series
of type $(an,n+1)$ for some integer $a$. Then $M_{L,V}$ is semistable.
\end{corollary}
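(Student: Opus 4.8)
The plan is to derive the corollary from Proposition \ref{prop1} together with the low-degree information recorded in Remark \ref{rem1}, exploiting the arithmetic simplification that occurs when the degree is divisible by $n$. The governing observation is that, for $d=an$, we have $\lfloor d/n\rfloor=a$, so the quantity controlling the criterion of \cite[Theorem 1.8]{afo} collapses:
$$d+n\left\lfloor\frac dn\right\rfloor=an+na=2an=2d.$$
Consequently the sufficient condition $d+n\lfloor d/n\rfloor\ge2g+2n-2$ used in the proof of Proposition \ref{prop1} becomes simply $d\ge g+n-1$. The first step is thus to settle every admissible degree $d=an$ with $d\ge g+n-1$ by a direct appeal to Proposition \ref{prop1}.

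The remaining degrees satisfy $d\le g+n-2$. For these the idea is that such a $d$, if it carries a linear series at all, must fall in the range treated by Remark \ref{rem1}: since the series exists, $g+n-\lfloor g/(n+1)\rfloor\le d\le g+n-2\le g+n$, which is precisely the hypothesis $(\ref{eq1})$. Whenever $g\ge3$, Remark \ref{rem1} then gives stability, and in particular semistability. So the second step is to invoke Remark \ref{rem1} on this residual range.

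The only point requiring care, and the one I expect to be the sole genuine obstacle, is to confirm that the residual range $d\le g+n-2$ can occur only when $g\ge3$, i.e. exactly when Remark \ref{rem1} is available. This is where the hypotheses $n\ge2$ and $d=an$ pull together: for $g=1,2$ one has $\lfloor g/(n+1)\rfloor=0$ (here $n\ge2$ is used when $g=2$), so the existence bound equals $g+n$ and every admissible $d$ already satisfies $d\ge g+n\ge g+n-1$; thus the first step alone handles $g\le2$ and the residual range is empty there. Conversely, any admissible $d$ with $d\le g+n-2$ forces $g\ge3$, so Remark \ref{rem1} applies. Combining the two steps covers all admissible $d=an$ for every $g\ge1$, which is the assertion; the whole argument is short, the substance being the collapse $d+n\lfloor d/n\rfloor=2d$ and the bookkeeping at $g\le2$.
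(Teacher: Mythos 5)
Your proof is correct and follows essentially the same route as the paper's: the Aprodu--Farkas--Ortega criterion underlying Proposition \ref{prop1} (which for $d=an$ collapses to $d\ge g+n-1$ since $\lfloor d/n\rfloor=a$) handles large degree, and Remark \ref{rem1} handles the residual range \eqref{eq1}, with $g\le2$ absorbed because the existence bound there is $g+n$. The paper instead verifies \eqref{eq2} by noting that $an>g+n=n(s+1)+t$ forces $an\ge n(s+2)=g+n+(n-t)$, but this is the same divisibility observation in a slightly different guise.
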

\begin{proof} If $an<g+n-\left\lfloor\frac{g}{n+1}\right\rfloor$, there are no linear series of type $(an,n+1)$. If $g+n-\left\lfloor\frac{g}{n+1}\right\rfloor\le an\le g+n$, the result follows from Remarks \ref{rem1} and \ref{g=1,2}. If $an>g+n$ and we write $g=ns+t$ with $0\le t\le n-1$, then $an>n(s+1)+t$, hence $an\ge n(s+2)=g+n+n-t$. The result now follows from Proposition \ref{prop1}.
\end{proof}

An earlier paper \cite{bh} addresses Conjectures \ref{conj1} and \ref{conj2}.
\begin{proposition}\label{prop2}\cite[Proposition 1.6, Theorem 1.7 and Remark 1.10]{bh}
Let $C$ be a general curve of genus $g\ge2$ and $(L,V)$ a general
linear series of type $(d,n+1)$ with $n\geq 3$.
\begin{itemize}
\item If $u=\left\lceil\frac{g-1}{n+2}\right\rceil$ and $d\ge n(u+1)+1$, then $M_{L,V}$ is semistable.
\item If $u=\left\lceil\frac{g-2}{n+2}\right\rceil$ and
$d\ge\max\{n(u+2)+1,3n+2\}$,
then $M_{L,V}$ is stable.
\end{itemize}
\end{proposition}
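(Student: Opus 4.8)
The plan is to prove (semi)stability by excluding destabilizing sub-line-bundle series on the general $(L,V)$, using classical Brill--Noether theory on the Petri curve together with a parameter count. Since $M_{L,V}$ has rank $n$ and degree $-d$, hence slope $-d/n$, a subsheaf $F\subset M_{L,V}$ destabilizes precisely when $\mu(F)>-d/n$ (with $\ge$ detecting failure of stability). Following Butler's reduction, I would first show that in searching for a subsheaf of maximal slope it suffices to consider those of the special form $M_{L',V'}$, where $V'\subseteq V$ is a subspace of dimension $k'$ and $L'\subseteq L$ is the sub-line-bundle generated by $V'$, of degree $d'$. Such an $F=M_{L',V'}$ has rank $k'-1$ and degree $-d'$, so it destabilizes exactly when
\begin{equation}\label{eqpp}
d'<\frac{(k'-1)d}{n}
\end{equation}
(with $\le$ detecting failure of stability). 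Thus semistability (resp. stability) follows once I show that the general linear series of type $(d,n+1)$ carries no proper sub-series $(L',V')$ satisfying \eqref{eqpp} (resp. the weak inequality).

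The next step is a genericity observation that restricts attention to special, low-degree sub-series. Under $\phi_{L,V}:C\to\PP^n$ the base points of a subspace $V'$ of dimension $k'$ are the points of $C$ lying on the linear space $\PP((V')^\perp)$ of codimension $k'$; for $k'\ge2$ a general such linear space misses the image curve, so a general $V'$ is base-point-free and generates all of $L$, giving $d'=d$ and $\mu(M_{L',V'})=-d/(k'-1)<-d/n$, which is harmless. Hence any destabilizing $(L',V')$ must be special: $L'$ is a sub-line-bundle of low degree, i.e.\ a $g^{k'-1}_{d'}$ with $d'$ small, that happens to be contained in $V$. I would then bound the locus of those $(L,V)\in G(1,d,n+1)$ containing such a sub-series, stratifying by $(k',d')$ and parametrizing the data as: a choice of $L'\in W^{k'-1}_{d'}$ (of dimension $\rho(g,k'-1,d')$ on the Petri curve), an effective divisor $D$ of degree $d-d'$ with $L=L'(D)$ (giving $V'\hookrightarrow H^0(L)$), and an extension of $V'$ to $V\subseteq H^0(L)$ of dimension $n+1$.

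Summing these contributions and comparing with $\dim G(1,d,n+1)=\rho(g,n,d)$, the essential point is that the resulting codimension estimate \emph{increases} with $d$. Indeed, evaluating at the extremal value $d'=(k'-1)d/n$ in \eqref{eqpp}, one finds that the expected codimension of the bad stratum takes the form
\[
\left(n+1-k'\right)\left(\frac{(k'-1)d}{n}-k'\right),
\]
which is positive for all $2\le k'\le n$ once $d$ is large and grows linearly in $d$ with positive coefficient $(k'-1)(n+1-k')/n$. This is the mechanism by which ``$d$ large'' forces semistability. Optimizing the estimate over all admissible strata — in particular over the number of independent sections carried by $L'$, which may exceed $k'$ and enlarges the locus — produces the precise threshold, encoded by $u=\lceil(g-1)/(n+2)\rceil$ and the bound $d\ge n(u+1)+1$. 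For stability one must in addition exclude sub-series of equal slope (the boundary case in \eqref{eqpp}); sharpening the count to strict inequality on every stratum, together with the finitely many small-rank, low-genus configurations handled by the $3n+2$ term, yields the stronger hypothesis $d\ge\max\{n(u+2)+1,3n+2\}$.

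The main obstacle is twofold. First, Butler's reduction to sub-line-bundle series requires genuine care: a subbundle of $M_{L,V}$ has no sections (as $h^0(M_{L,V})=0$), so it is not literally cut out by a subspace of $V$, and one must argue via saturations and the induced generated quotient on the dual side $M_{L,V}^*$ that the maximal-slope destabilizing subsheaf may nevertheless be taken of the form $M_{L',V'}$. Second, and more seriously, the parameter count must be made rigorous and \emph{sharp}: one has to verify that each stratum has at most its expected dimension (ruling out excess components) and to optimize over all admissible $(k',d')$ and over the possible values of $h^0(L')$, since this last choice is what ultimately pins down the exact constant in $n(u+1)+1$. It is precisely here that the Petri hypothesis is essential, guaranteeing that the loci $W^{k'-1}_{d'}$ are smooth of the expected dimension and that the relevant incidence varieties behave as predicted; controlling these dimensions uniformly in $k'$ is the crux of the argument.
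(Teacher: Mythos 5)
There is a genuine gap, and it is worth noting first that the paper itself contains no proof of this statement to compare against: Proposition \ref{prop2} is quoted directly from Ballico--Hein \cite{bh}, whose argument proceeds by degenerating to reducible nodal curves and studying the restriction of $T{\PP}^n$ (equivalently $M_{L,V}^*\simeq\phi_{L,V}^*(T{\PP^n}(-1))$), not by a Brill--Noether parameter count on a Petri curve. Your attempt must therefore stand on its own, and as written it does not. The structural parts are sound: the reduction of destabilization to sub-series, i.e.\ that a subsheaf of $M_{L,V}$ of maximal slope may be taken of the form $M_{L',V'}$, is a known lemma (Butler; proved carefully in Mistretta--Stoppino \cite{ms}), and the destabilizing inequality $d'<\frac{(k'-1)d}{n}$ is correct. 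The problem is that the quantitative core --- the only part that could produce the specific thresholds --- is asserted rather than proved.

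Concretely: your displayed ``expected codimension'' $\left(n+1-k'\right)\left(\frac{(k'-1)d}{n}-k'\right)$ contains no dependence on $g$ whatsoever, so it cannot by itself yield the $g$-dependent bound $d\ge n(u+1)+1$ with $u=\left\lceil\frac{g-1}{n+2}\right\rceil$. The genus must enter through $\rho(g,k'-1,d')=\dim W^{k'-1}_{d'}$ and through the emptiness of $W^{k'-1}_{d'}$ when $\rho<0$ on a Petri curve --- data you list in the parametrization but then drop from the estimate, leaving the two halves of the argument disconnected. The subsequent steps, ``optimizing \dots\ produces the precise threshold encoded by $u$'' and ``sharpening the count \dots\ yields $d\ge\max\{n(u+2)+1,3n+2\}$,'' simply restate the proposition: no computation is offered, and it is far from clear that this count closes at those values --- indeed, the closely analogous counts actually carried out in this paper (via $C_{12}$ and Lemma \ref{lemc12}, leading to Theorem \ref{th7}) produce bounds of a quite different shape, $d<g+n+\frac{(n^2-n-2)g}{2(n-1)^2}$, which suggests the Ballico--Hein thresholds do not fall out of this mechanism in any obvious way. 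You also correctly identify, but do not resolve, the two standard hard points: verifying that every stratum (including those with $h^0(L')>k'$, and possible excess or non-reduced components of the incidence variety) has at most expected dimension, and making the Butler-type reduction compatible with the stratified count. In its present form the proposal is a reasonable research plan in the spirit of Mistretta and of \cite{bbn}, but not a proof of the stated bounds.
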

 Note that, if $n\ge3$, the second case of Proposition \ref{prop2} applies to
 all $d\ge g+n+1$ whenever $g\ge n^2$ (which is slightly stronger
 than the condition $g\ge n^2-1$ mentioned above).

Teixidor \cite{te2} also has some results on Conjecture
\ref{conj2}. All of these results are based on deforming reducible
nodal curves and so do not specify the meaning of the term
``general curve''. On the other hand, our results in \cite{bbn}
assume only that $C$ is a Petri curve and address Conjecture
\ref{conj3}.  To extend the range of values for which Conjecture
\ref{conj3} (hence also Conjecture \ref{conj2}) is known, we used wall-crossing formulae for coherent
systems. In the following sections, we will exploit these techniques
further and obtain new results on all three conjectures, in particular proving Conjecture \ref{conj1} in all cases.

\section{Coherent Systems}\label{coh}

In this section, we summarise some facts about coherent systems which we will need in section \ref{conj}. Details may be found in \cite{bgmn}, \cite{bgmmn} and \cite{bbn}.

With the notation of section \ref{back}, note that, if $(E,V)\in G_L(r,d,k)$ and $E$ is stable, then $(E,V)\in G(\alpha;r,d,k)$ for all $\alpha>0$. We define
\begin{eqnarray*}
U(r,d,k)&:=&\{(E,V)\in G_L(r,d,k)|E \mbox{ is
stable}\}\\&=&\{(E,V)|E \mbox{ is stable and } (E,V)\in
G(\alpha;r,d,k)\mbox{ for all }\alpha>0\}.
\end{eqnarray*}

\begin{remark}\label{rem7}\begin{em}
By the argument in Remark \ref{rem6}, we note that, if $M_{L,V}$ is
stable, then $D(L,V)\in U(n,d,n+1)$.  It follows that, on any Petri curve, Conjecture \ref{conj3} is equivalent to
the assertion that $U(n,d,n+1)\ne\emptyset$ when $d\ge g+n-\left\lfloor\frac{g}{n+1}\right\rfloor$ \cite[Proposition
9.6]{bbn}. From this and \cite[Theorem 5.4]{bgmmn} (see also \cite{mer1, mer2}), it follows that, when $3\le g\le n$, Conjecture \ref{conj3} holds also when
\begin{equation}\label{2n}
g+n\le d\le2n.
\end{equation}
\end{em}\end{remark}

\begin{lemma}\label{lem5}
On any smooth curve $C$, suppose that $U(n,d,n+1)\ne\emptyset$. Then $U(n,d+cn,n+1)\ne\emptyset$ for any positive integer $c$.
\end{lemma}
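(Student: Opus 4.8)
The plan is to reduce to the case $c=1$ by induction and then to raise the degree by tensoring with $\cO_C(p)$ for a point $p$. Given $(E,V)\in U(n,d,n+1)$, the bundle $E$ is stable of rank $n$ and degree $d$, so $E(p):=E\otimes\cO_C(p)$ is again stable, now of rank $n$ and degree $d+n$; here it is essential that $\rk E=n$, so that tensoring by a degree-$1$ line bundle raises the degree by exactly $n$. The canonical inclusion $E\hra E(p)$ induces an injection $H^0(E)\hra H^0(E(p))$, so $V$ determines a subspace $V'\subset H^0(E(p))$ with $\dim V'=n+1$, giving a coherent system $(E(p),V')$ of type $(n,d+n,n+1)$. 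It then suffices to prove $(E(p),V')\in U(n,d+n,n+1)$; iterating (or equivalently tensoring once by $\cO_C(p_1+\cdots+p_c)$) yields the general statement.

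To verify membership in $U$, I would first record a combinatorial reformulation of $\alpha$-stability for all $\alpha>0$. For a coherent system of type $(n,d',n+1)$ whose underlying bundle is stable, and a proper subsystem $(F,W)$ with $W=V'\cap H^0(F)$, $\rk F=r'$ and $k':=\dim W$, the defining inequality holds for all $\alpha>0$ precisely when $\frac{k'}{r'}<\frac{n+1}{n}$, the degree part being automatic from stability of the bundle. Since $\gcd(n,n+1)=1$ and $0<r'<n$, the equality $\frac{k'}{r'}=\frac{n+1}{n}$ cannot occur, and $\frac{k'}{r'}<\frac{n+1}{n}$ is equivalent to the integral condition $k'\le r'$. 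Thus $(E(p),V')\in U(n,d+n,n+1)$ if and only if $E(p)$ is stable and $\dim(V'\cap H^0(F))\le\rk F$ for every proper subbundle $F\subset E(p)$; the very same reformulation characterises the hypothesis $(E,V)\in U(n,d,n+1)$.

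The key step is to transfer this section-counting condition from $E$ to $E(p)$. Given a proper subbundle $F\subset E(p)$ of rank $r'$, I would intersect with $E$ inside $E(p)$ and let $G\subset E$ be the saturation of $F\cap E$; since $E$ and $E(p)$ agree at the generic point, $G$ is a subbundle of $E$ of rank $r'$, and it is proper because $r'<n$. As every section in $V'$ already lies in $H^0(E)$, a section of $V'$ factors through $F$ if and only if it factors through $F\cap E$, whence $V'\cap H^0(F)=V\cap H^0(F\cap E)\subseteq V\cap H^0(G)$. Applying the reformulated hypothesis to the proper subbundle $G\subset E$ then gives $\dim(V'\cap H^0(F))\le\dim(V\cap H^0(G))\le r'$, which is exactly what is needed.

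The main obstacle is precisely this last transfer. The small-$\alpha$ end of $\alpha$-stability is immediate from the stability of $E(p)$, but the large-$\alpha$ end requires controlling $\dim(V'\cap H^0(F))$ for subbundles $F$ of $E(p)$ that bear no a priori relation to subbundles of $E$. The device of replacing $F$ by the saturation of $F\cap E$ is what makes the comparison go through, and the points to check carefully are that this operation preserves the rank and can only enlarge the space of sections being counted, so that the hypothesis on $E$ forces the required bound on $E(p)$.
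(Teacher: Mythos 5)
Your proposal is correct and follows essentially the same route as the paper: the paper's proof simply tensors $(E,W)$ by an effective line bundle $\cL$ of degree $c$ together with a non-zero section $s$ of $\cL$ and asserts that $(E\otimes\cL,W\otimes s)\in U(n,d+cn,n+1)$, which is exactly your construction with $\cL=\cO_C(p_1+\cdots+p_c)$. Your reduction of $\alpha$-stability for all $\alpha>0$ to the condition $k'\le r'$ (using $\gcd(n,n+1)=1$) and the transfer of that bound via the saturation of $F\cap E$ correctly supply the verification that the paper leaves implicit.
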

\begin{proof}
(Compare \cite[Remark 2.2]{bbn}.) Suppose $(E,W)\in U(n,d,n+1)$. Let $\cL$ be any effective line bundle of degree $c$ and let $s$ be a non-zero section of $\cL$. Then $(E\otimes\cL,W\otimes s)\in U(n,d+cn,n+1)$.
\end{proof}

Taking account of Remark \ref{rem7}, we have the following immediate corollary.
\begin{corollary}\label{cor5}
Suppose that $C$ is a Petri curve of genus $g\ge3$, $n\ge2$ and $M_{L,V}$ is stable for the general linear series $(L,V)$ of type $(d,n+1)$ for all $d$ with
$$g+n-\left\lfloor\frac{g}{n+1}\right\rfloor\le d<g+n+b,$$
where $b+\left\lfloor\frac{g}{n+1}\right\rfloor>n-1$. Then $M_{L',V'}$ is stable for the general linear series $(L',V')$ of type $(d',n+1)$ for all $d'\ge g+n-\left\lfloor\frac{g}{n+1}\right\rfloor$.
\end{corollary}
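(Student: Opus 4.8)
The plan is to reduce the assertion to a statement about the non-emptiness of the spaces $U(n,d,n+1)$ and then to propagate that non-emptiness to all higher degrees using the tensoring construction of Lemma \ref{lem5}. Write $d_0:=g+n-\left\lfloor\frac{g}{n+1}\right\rfloor$. By Remark \ref{rem7}, for any $d\ge d_0$ the stability of $M_{L,V}$ for the general linear series of type $(d,n+1)$ is equivalent to $U(n,d,n+1)\ne\emptyset$. Thus the hypothesis translates directly into the statement that $U(n,d,n+1)\ne\emptyset$ for every integer $d$ with $d_0\le d<g+n+b$, while the desired conclusion is equivalent to $U(n,d',n+1)\ne\emptyset$ for every integer $d'\ge d_0$.

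First I would note that Lemma \ref{lem5} allows us to add any positive multiple of $n$ to a degree for which non-emptiness is already known. Consequently, to reach all $d'\ge d_0$ it suffices to establish non-emptiness for a single degree in each residue class modulo $n$, and in particular for the $n$ consecutive degrees $d_0,d_0+1,\dots,d_0+n-1$. Indeed, any integer $d'\ge d_0$ can be written uniquely as $d'=(d_0+r)+cn$ with $0\le r\le n-1$ and $c\ge0$; when $c\ge1$, Lemma \ref{lem5} yields $U(n,d',n+1)\ne\emptyset$ from $U(n,d_0+r,n+1)\ne\emptyset$, and when $c=0$ the degree $d'=d_0+r$ already lies in the known range, so no tensoring is needed. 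A final application of the equivalence in Remark \ref{rem7} then gives stability of $M_{L',V'}$ for the general linear series of type $(d',n+1)$.

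The one point requiring care is verifying that the established range $[d_0,\,g+n+b)$ really contains all $n$ of these consecutive integers, that is, that $d_0+n-1<g+n+b$. Substituting the value of $d_0$, this inequality reads $n-1-\left\lfloor\frac{g}{n+1}\right\rfloor<b$, which is exactly the hypothesis $b+\left\lfloor\frac{g}{n+1}\right\rfloor>n-1$; so the $n$ consecutive degrees $d_0,\dots,d_0+n-1$ do lie in the range where non-emptiness is given, and they form a complete set of residues modulo $n$. Beyond this index count the argument is pure bookkeeping, so I expect no serious obstacle: the hypothesis on $b$ has been tailored precisely to guarantee that the known band of degrees is wide enough to cover every residue class modulo $n$.
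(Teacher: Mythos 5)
Your proof is correct and is essentially the argument the paper intends: the paper derives this as an immediate consequence of Lemma \ref{lem5} and Remark \ref{rem7}, and your write-up simply makes explicit the residue-class bookkeeping showing that the hypothesis on $b$ guarantees the known range contains $n$ consecutive degrees.
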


We recall that every irreducible component $X$ of $G(\alpha;r,d,k)$ has dimension
$$\dim X\ge\beta(r,d,k):= r^2(g-1)+1-k(k-d+r(g-1)).$$
On a Petri curve, we have (see \cite[Theorem 3.1]{bbn}) the equality
$$\dim G_L(n,d,n+1)=\beta(n,d,n+1)=\beta(1,d,n+1)=g-(n+1)(n-d+g).$$
Moreover 
\begin{itemize}
\item $G_L(n,d,n+1)\ne\emptyset$ if and only if $\beta(n,d,n+1)\ge0$ or equivalently $d\ge g+n-\frac{g}{n+1}$;
\item $G_L(n,d,n+1)$ is irreducible if $\beta(n,d,n+1)>0$ and finite if $\beta(n,d,n+1)=0$;
\item the general element of $G_L(n,d,n+1)$ (every element when $\beta(n,d,n+1)=0$) is generated.
\end{itemize}  

In order to prove that $U(n,d,n+1)\ne\emptyset$, we need to show that the general $(E,W)\in G_L(n,d,n+1)$ has $E$ stable. To do this, we either show that 
$$\{(E,W)\in G_L(n,d,n+1)|E \mbox{ not stable}\}=\emptyset$$ 
or prove that 
\begin{equation}\label{eq35}
\dim\{(E,W)\in G_L(n,d,n+1)|E \mbox{ not stable}\}<\beta(n,d,n+1).
\end{equation}
If $C$ is Petri, we can assume that $(E,W)$ is generated and $h^0(E^*)=0$ \cite[Theorem 3.1]{bbn}. If $E$ is not stable, there exists a stable subbundle $E_1$ of $E$ with $\mu(E_1)\ge\mu(E)$ and hence an extension of
coherent systems
\begin{equation}\label{eq8} 0\ra (E_1,V_1)\ra(E,W)\ra(E_2,V_2)\ra0
\end{equation}
with $E_1$ stable and $(E_i,V_i)$ of type $(n_i,d_i,k_i)$. Note that $(E_2,V_2)$ is generated and $h^0(E_2^*)=0$. It follows that
\begin{equation}\label{eq11}
k_2\ge n_2+1,\ \  1+\frac{1}{n_2}\left(g-\left\lfloor\frac{g}{n_2+1}\right\rfloor\right)\le\mu(E_2)\le\mu(E)\le\mu(E_1).
\end{equation}
(Note that this varies from the set-up of \cite[(6.1)]{bbn} in that we assume that $E_1$ is stable rather than $E_2$.)

Following \cite[(9)]{bgmn} and \cite[(2.6)]{bbn}, we define
\begin{equation}\label{eq14}
C_{12}:=(k_1-n_1)(d_2-n_2(g-1))+n_2d_1-k_1k_2.
\end{equation}
(These numbers are crucial in obtaining wall-crossing formulae for coherent systems.) We write also $N_2$ for the kernel of the evaluation map $V_2\otimes\cO_C\ra E_2$. Taking account of \cite[(2.7) and Remark 2.7]{bbn}, the following is a special case of \cite[Lemma 2.6]{bbn}.

\begin{lemma}\label{lemc12}
Suppose that, for $i=1,2$, $(E_i,V_i)$ varies in a family depending on at most $\beta(n_i,d_i,k_i)$ parameters. Suppose further that
\begin{equation}\label{eq29}
C_{12}>h^0(E_1^*\otimes N_2\otimes K)
\end{equation}
for all $(E_i,V_i)$. Then the coherent systems $(E,W)\in G_L(n,d,n+1)$ arising as extensions \eqref{eq8} depend on at most $\beta(n,d,n+1)-1$ parameters.
\end{lemma}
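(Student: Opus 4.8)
The plan is to bound the dimension of the family of those coherent systems $(E,W)\in G_L(n,d,n+1)$ that arise as extensions \eqref{eq8} by a parameter count, and to show this number is at most $\beta(n,d,n+1)-1$. Writing $S_i=(E_i,V_i)$, such an $(E,W)$ is determined by the sub-system $S_1$, the quotient $S_2$ and an extension class $e\in\Ext^1_{CS}(S_2,S_1)$, where $\Ext^\bullet_{CS}$ denotes Ext in the category of coherent systems. By hypothesis $S_1$ and $S_2$ move in families of dimension at most $\beta(n_1,d_1,k_1)$ and $\beta(n_2,d_2,k_2)$; since scaling $e$, and replacing $S_1\subset(E,W)$ by another sub-system of the same type, both leave $(E,W)$ unchanged, the resulting estimate has the shape
\[\dim\{(E,W)\ \text{as in}\ \eqref{eq8}\}\le\beta(n_1,d_1,k_1)+\beta(n_2,d_2,k_2)+\dim\Ext^1_{CS}(S_2,S_1)-\delta,\]
where $\delta$ is an automorphism/overcounting correction. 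For the precise value of $\delta$ I would appeal to the general extension-counting estimate \cite[Lemma 2.6]{bbn}, of which the present statement is a specialisation.

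To convert this into a statement about $\beta(n,d,n+1)$ I would use the additivity of the coherent-system Euler characteristic along the filtration of $(E,W)$. Setting $\chi_{ij}:=\chi_{CS}(S_i,S_j)$, one has $\chi_{CS}((E,W),(E,W))=\sum_{i,j}\chi_{ij}$, and each $\chi_{ij}$ is the numerical invariant $\chi(E_i,E_j)-k_i\chi(E_j)+k_ik_j$, where $\chi(E_i,E_j)$ is the Euler characteristic of $E_i^*\otimes E_j$; in particular $C_{ij}=-\chi_{ij}$, matching \eqref{eq14}. A direct computation gives $1-\chi_{CS}((E,W),(E,W))=\beta(n,d,n+1)$ and hence the key identity
\[\beta(n,d,n+1)=\beta(n_1,d_1,k_1)+\beta(n_2,d_2,k_2)-1+C_{12}+C_{21}.\]
On the other hand, $C_{21}=-\chi_{21}$ expresses $\dim\Ext^1_{CS}(S_2,S_1)$ in terms of $\dim\Hom_{CS}(S_2,S_1)$, $\dim\Ext^2_{CS}(S_2,S_1)$ and $C_{21}$. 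Substituting both relations, the numerical terms $C_{21}$ cancel and the desired inequality $\dim\{(E,W)\}\le\beta(n,d,n+1)-1$ reduces to a bound involving only $C_{12}$ together with the \emph{actual} dimensions of $\Hom_{CS}(S_2,S_1)$ and $\Ext^2_{CS}(S_2,S_1)$ and the correction $\delta$.

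The final, and decisive, step is the cohomological evaluation of these groups. Applying $\Hom(-,E_1)$ to the defining sequence $0\to N_2\to V_2\otimes\cO_C\to E_2\to0$ and using Serre duality on $C$ identifies $\Ext^2_{CS}(S_2,S_1)$ with $H^0(E_1^*\otimes N_2\otimes K)^*$, so that $\dim\Ext^2_{CS}(S_2,S_1)=h^0(E_1^*\otimes N_2\otimes K)$, exactly the quantity on the right of \eqref{eq29}; the stability of $E_1$ and the slope inequalities \eqref{eq11} are then used to control the $\Hom_{CS}(S_2,S_1)$ contribution. These identifications are the content of \cite[(2.7) and Remark 2.7]{bbn}. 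I expect this last step to be the main obstacle: one must carry out the cohomological identification and match it against the correction $\delta$ precisely enough that the single hypothesis $C_{12}>h^0(E_1^*\otimes N_2\otimes K)$ forces $\dim\{(E,W)\}<\beta(n,d,n+1)$, which is the inequality needed in \eqref{eq35}.
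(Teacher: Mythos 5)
Your proposal is correct and takes essentially the same route as the paper, whose entire proof of this lemma is the observation that it is a special case of \cite[Lemma 2.6]{bbn} combined with \cite[(2.7) and Remark 2.7]{bbn}; your sketch accurately reconstructs the wall-crossing count behind that lemma (the extension parameter count, the identity $\beta(n,d,n+1)=\beta(n_1,d_1,k_1)+\beta(n_2,d_2,k_2)-1+C_{12}+C_{21}$, and the identification of the $\Ext^2$ term with $H^0(E_1^*\otimes N_2\otimes K)^*$). The one ingredient you leave implicit --- the vanishing of $\Hom$ from $(E_2,V_2)$ to $(E_1,V_1)$, needed so that the hypothesis \eqref{eq29} alone suffices --- is precisely what the paper also delegates to \cite[Remark 2.7]{bbn}.
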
 

\begin{remark}\label{k2n2}\begin{em}
We shall want to apply Lemma \ref{lemc12} when $k_2=n_2+1$ and hence $k_1=n_1$. Since $(E_2,V_2)$ is generated and $h^0(E_2^*)=0$, we have $(E_2,V_2)\in G_L(n_2,d_2,n_2+1)$, so $(E_2,V_2)$ depends on at most $\beta(n_2,d_2,n_2+1)$ parameters.
Since $(E_1,V_1)$ is of type $(n_1,d_1,n_1)$, it must be generically generated, for
otherwise there would exist a subsystem $(F,W_1)$ with $\dim W_1>\rk F$, contradicting the fact that $(E,W)\in G_L(n,d,n+1)$. Since  $E_1$ is stable, it follows that $(E_1,V_1)\in G_L(n_1,d_1,n_1)$. To see this, note first that $V_1\otimes {\mathcal O}_C \subset E_1$. Hence for any subsystem $(F_1,W_1) \subset (E_1,V_1)$, we have $W_1\otimes {\mathcal O}_C \subset F_1$, so that dim $W_1 \le \rk F_1$. This together with the fact that $\mu(F_1) < \mu(E_1)$ implies that $(E_1, V_1)$ is $\alpha$-stable for all $\alpha>0$. 
Since $(E_1,V_1)\in G_L(n_1,d_1,n_1)$, $(E_1,V_1)$ depends on at most $\beta(n_1,d_1,n_1)$ parameters by \cite[Theorem 5.6]{bgmn}. Thus Lemma \ref{lemc12} does apply in this case.
\end{em}\end{remark}

\section{Key Theorem}\label{conj}
Let $C$ be a Petri curve and let $(E,W)$ be a generated coherent
system of type $(n,d,n+1)$  with $h^0(E^*)=0$. Then $(E,W)\in
G_L(n,d,n+1)$. We consider extensions \eqref{eq8} satisfying \eqref{eq11}.

First suppose that $n_2=1$. 

\begin{lemma}\label{lem1}
Suppose that $C$ is a Petri curve, $n\ge2$, $n_2=1$ and 
\begin{equation}\label{eq10}
\frac{d}{n}<g+1-\left\lfloor\frac{g}2\right\rfloor.
\end{equation}
Then no extension \eqref{eq8} exists satisfying \eqref{eq11}.
\end{lemma}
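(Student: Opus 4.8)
The plan is to show that under hypothesis \eqref{eq10}, the chain of inequalities in \eqref{eq11} becomes impossible when $n_2=1$, so that no extension \eqref{eq8} with the stated properties can exist. The key observation is that when $n_2=1$, the quotient $(E_2,V_2)$ has underlying bundle $E_2$ a line bundle, and the condition $k_2\ge n_2+1=2$ forces $E_2$ to have at least two independent sections. I would first record exactly what \eqref{eq11} says in this case: setting $n_2=1$, the slope bound reads
\begin{equation*}
1+\frac1{n_2}\left(g-\left\lfloor\frac{g}{n_2+1}\right\rfloor\right)=1+g-\left\lfloor\frac g2\right\rfloor\le\mu(E_2)=\deg E_2=d_2.
\end{equation*}
Thus the degree of the line bundle $E_2$ is at least $1+g-\lfloor g/2\rfloor$.

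Next I would exploit the middle inequality $\mu(E_2)\le\mu(E)=d/n$. Combining this with the lower bound on $\mu(E_2)$ just obtained gives
\begin{equation*}
1+g-\left\lfloor\frac g2\right\rfloor\le\mu(E_2)\le\frac dn.
\end{equation*}
But hypothesis \eqref{eq10} asserts precisely that $d/n<g+1-\lfloor g/2\rfloor$, which directly contradicts the displayed inequality. Hence the assumed extension cannot exist, and the lemma follows.

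The only real content is verifying that the lower bound in \eqref{eq11} is genuinely the right one when $n_2=1$; the inequality $k_2\ge n_2+1$ is what makes $(E_2,V_2)$ a genuine linear series of type $(1,d_2,2)$, and the degree bound $1+g-\lfloor g/2\rfloor$ is just the Brill--Noether existence bound $d\ge g+n-\lfloor g/(n+1)\rfloor$ specialised to $n=n_2=1$ (a pencil on a Petri curve needs degree at least $g+1-\lfloor g/2\rfloor$). This is already encoded in \eqref{eq11}, so no new computation is required beyond substituting $n_2=1$. I expect no serious obstacle here: the proof is essentially a one-line contradiction obtained by chaining the slope inequalities, and the main point to state carefully is simply that the general machinery of \eqref{eq11} applies verbatim with $n_2=1$, the floor $\lfloor g/(n_2+1)\rfloor$ becoming $\lfloor g/2\rfloor$. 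The case $n_2=1$ is therefore the easy base case that the hypothesis \eqref{eq10} is tailored to eliminate outright.
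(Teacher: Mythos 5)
Your argument is correct and coincides with the paper's own proof: substitute $n_2=1$ into \eqref{eq11} to get $d_2\ge g+1-\left\lfloor\frac{g}{2}\right\rfloor$, then chain with $\mu(E_2)\le\mu(E)=d/n$ to contradict \eqref{eq10}. Nothing further is needed.
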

\begin{proof} If such an extension exists, then, by \eqref{eq11}, $E_2$ is a line bundle with  $d_2\ge g+1-\left\lfloor\frac{g}2\right\rfloor$. Using \eqref{eq11} again, this contradicts \eqref{eq10}.
\end{proof}

Next we suppose that $k_2\ge n_2+2$. The following lemma generalises \cite[Proposition 6.12]{bbn}.
\begin{lemma}\label{lem2}
Suppose that $C$ is a Petri curve, $n\ge3$ and
\begin{equation}\label{eq9}
d<g+n+\min\left\{\frac{(n^2-n-2)g}{2(n-1)^2},n+\frac{g}{2n-1}\right\}.
\end{equation}
Then there exists no extension \eqref{eq8} satisfying \eqref{eq11} with $k_2\ge n_2+2$.
\end{lemma}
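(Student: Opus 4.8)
The plan is to argue by contradiction: suppose an extension \eqref{eq8} satisfying \eqref{eq11} exists with $k_2\ge n_2+2$, and show that under hypothesis \eqref{eq9} this is impossible, most likely by producing a parameter count that conflicts with the dimension formula, or by directly deriving a numerical inequality that contradicts \eqref{eq9}. First I would record the basic numerical constraints forced by \eqref{eq11}: we have $k_1=n_1$ forced when $k_2=n_2+1$, but now $k_2\ge n_2+2$, so I would set $k_2=n_2+e$ with $e\ge2$ and track $k_1=k-k_2=(n+1)-(n_2+e)$. Since $(E_2,V_2)$ is generated with $h^0(E_2^*)=0$ and $k_2\ge n_2+2$, the quotient system $(E_2,V_2)$ lies in a Brill--Noether locus $G_L(n_2,d_2,k_2)$ whose expected dimension $\beta(n_2,d_2,k_2)$ I would compute explicitly, and the slope bound $\mu(E_2)\ge 1+\frac1{n_2}\bigl(g-\lfloor\frac{g}{n_2+1}\rfloor\bigr)$ from \eqref{eq11} gives a lower bound on $d_2$.

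The main engine is Lemma \ref{lemc12}: if I can verify the hypotheses, then extensions of the given shape depend on at most $\beta(n,d,n+1)-1$ parameters, which is strictly fewer than the dimension of the generic locus, so the generic $(E,W)$ is not of this form. Thus the second step is to check that each factor $(E_i,V_i)$ varies in a family of at most $\beta(n_i,d_i,k_i)$ parameters (for $(E_2,V_2)$ this is automatic from $(E_2,V_2)\in G_L(n_2,d_2,k_2)$ on the Petri curve; for $(E_1,V_1)$ with $E_1$ stable one argues as in Remark \ref{k2n2}, noting $(E_1,V_1)$ is generically generated), and then to establish the crucial inequality \eqref{eq29}, namely $C_{12}>h^0(E_1^*\otimes N_2\otimes K)$. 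Here $C_{12}$ is given by \eqref{eq14}, and $N_2$ is the kernel bundle of $V_2\otimes\cO_C\to E_2$ of rank $k_2-n_2=e\ge2$ and degree $-d_2$. I would bound $h^0(E_1^*\otimes N_2\otimes K)$ from above: since $N_2$ sits inside $V_2\otimes\cO_C$, one gets $h^0(E_1^*\otimes N_2\otimes K)\le h^0(E_1^*\otimes K)^{\oplus k_2}$-type estimates, but sharper control comes from using the slope of $E_1$ (which is large by \eqref{eq11}) together with stability of $E_1$ to force many of the relevant Hom-groups to vanish.

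The hard part, and the real content of the lemma, will be converting the inequality $C_{12}>h^0(E_1^*\otimes N_2\otimes K)$ into the clean numerical hypothesis \eqref{eq9}. This is where the two competing terms $\frac{(n^2-n-2)g}{2(n-1)^2}$ and $n+\frac{g}{2n-1}$ in the minimum should emerge: the first bound presumably governs the ``balanced'' subsystems (with $n_2$ in the middle range, where $(n-1)^2$ appears as $n_1n_2$ is maximised), while the second, with denominator $2n-1$, should come from the extreme case $n_2=n-1$ or $n_1=1$. My strategy would be to express $C_{12}-h^0(E_1^*\otimes N_2\otimes K)$ as a function of $n_1,n_2,d_1,d_2,k_2$, substitute the slope bounds from \eqref{eq11}, and then optimise over the discrete parameters; the inequality to beat will be worst at the boundary cases, yielding exactly the two expressions in the minimum. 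I expect the genuinely delicate step to be the estimate for $h^0(E_1^*\otimes N_2\otimes K)$, since controlling this cohomology group requires exploiting the Petri condition (to pin down the dimension of the relevant multiplication or Koszul-type maps) rather than merely applying Riemann--Roch, and it is precisely the Petri hypothesis that makes the sharp bound \eqref{eq9} attainable.
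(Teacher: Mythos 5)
There is a genuine gap: your proposal is a strategy sketch rather than a proof, and the strategy it sketches is not the one that works here. You route everything through Lemma \ref{lemc12}, i.e.\ the wall-crossing parameter count via $C_{12}>h^0(E_1^*\otimes N_2\otimes K)$. That is the paper's method for the \emph{other} case, $k_2=n_2+1$ (Lemma \ref{lem4}), where $N_2$ is a line bundle and Remark \ref{k2n2} verifies the hypothesis that $(E_1,V_1)$, being of type $(n_1,d_1,n_1)$, moves in at most $\beta(n_1,d_1,n_1)$ parameters. For $k_2\ge n_2+2$ you would have $k_1<n_1$ and $\rk N_2=k_2-n_2\ge2$; you neither verify the parameter-count hypothesis for $(E_1,V_1)$ in this range nor carry out the estimate of $h^0(E_1^*\otimes N_2\otimes K)$, which you yourself flag as ``the genuinely delicate step''. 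Since that step is the entire content of the lemma, nothing has been proved. Moreover, even if completed, your argument would only show that such extensions fill a locus of dimension at most $\beta(n,d,n+1)-1$, whereas the lemma asserts that \emph{no} extension exists; and your prediction for the origin of the two terms in the minimum of \eqref{eq9} is incorrect.

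What the paper actually does is a direct degree estimate that never touches $C_{12}$. Since $(E_2,V_2)$ is generated and $k_2\ge n_2+2$, choose a generating subspace $W_2\subset H^0(E_2)$ of dimension exactly $n_2+2$; the kernel $N_2'$ of $W_2\otimes\cO\to E_2$ has rank $2$, and dualising gives a rank $2$ bundle $N_2'^*$ of degree $d_2$ with $h^0(N_2'^*)\ge n_2+2$. One then analyses the line subbundles of $N_2'^*$: either every line subbundle has at most one section, in which case a lemma of Paranjape--Ramanan gives $h^0(\det N_2'^*)\ge 2n_2+1$ and the Petri existence bound for linear series forces $d_2\ge g+2n_2-\frac{g}{2n_2+1}$; or $N_2'^*$ is an extension of line bundles each with prescribed numbers of sections, and applying the Petri bound to each line bundle yields $d_2\ge 2g+n_2-\frac{(n_2+2)g}{s(n_2+2-s)}\ge 2g+n_2-\frac{(n_2+2)g}{2n_2}$. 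Combining either bound with $\frac{d}{n}\ge\mu(E_2)=\frac{d_2}{n_2}$ and $n_2\le n-1$ contradicts \eqref{eq9}; both terms in the minimum arise at $n_2=n-1$, one from each branch of this dichotomy. The Petri hypothesis enters only through the existence bound $d\ge g+n-\lfloor\frac{g}{n+1}\rfloor$ for line bundles, not through any Koszul-type cohomology estimate. This reduction to a rank $2$ kernel bundle is the key idea missing from your proposal.
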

\begin{proof} Suppose that an extension \eqref{eq8} with the stated properties exists. Since $(E_2,V_2)$ is generated and $k_2\ge n_2+2$, we can choose a
subspace $W_2$ of $H^0(E_2)$ of dimension $n_2+2$ which generates $E_2$. This yields an exact sequence
$$0\ra N_2'\ra W_2\otimes\cO\ra E_2\ra0,$$
where $N_2'$ has rank $2$ and $h^0(N_2')=0$. Dualising this sequence
and using the fact that $h^0(E_2^*)=0$, we have $h^0(N_2'^*)\ge
n_2+2$; moreover $\deg N_2'^*=d_2$. Following the proof of
\cite[Proposition 6.12]{bbn}, we consider three possibilities.
\begin{itemize}
\item[(i)] $h^0(L_1)\le1$ for every line subbundle $L_1$ of $N_2'^*$.
\item[(ii)] There exists an exact sequence
$$0\ra L_1\ra N_2'^*\ra L_2\ra0$$
with $h^0(L_2)=s, 2\le s\le n_2$.
\item[(iii)] There exists an exact sequence as in (ii) with $h^0(L_2)\ge n_2+1$, $h^0(L_1)\ge2$.
\end{itemize}

In case (i), \cite[Lemma 3.9]{pr} implies that $h^0(\det N_2'^*)\ge2n_2+1$, so, by Remark \ref{rem1},
\begin{equation}\label{eq12}
d_2\ge g+2n_2-\frac{g}{2n_2+1}.
\end{equation}
In case (ii),
$$d_2=\deg L_1+\deg L_2\ge g+s-1-\frac{g}{s}+g+n_2+1-s-\frac{g}{n_2+2-s}.$$ 
Thus $$d_2\ge2g + n_2-\frac{(n_2+2)g}{s(n_2+2-s)}\, .$$
The right hand side of this expression takes its minimum value at $s=2$ (and $s=n_2$), so
\begin{equation}\label{eq13}
d_2\ge 2g+n_2-\frac{(n_2+2)g}{2n_2}.
\end{equation}
Finally, in case (iii),
$$d_2=\deg L_1+\deg L_2\ge g+1-\frac{g}2+g+n_2-\frac{g}{n_2+1}>2g+n_2-\frac{(n_2+2)g}{2n_2}.$$
So \eqref{eq13} holds again in this case.

Now recall from \eqref{eq11} that $\frac{d}n\ge\frac{d_2}{n_2}$; moreover $n_2\le n-1$. Substituting in \eqref{eq12} and \eqref{eq13}, we obtain a contradiction to \eqref{eq9}.
\end{proof}

\begin{remark}\label{rem8}\begin{em}
The second term in the minimum of \eqref{eq9} is essentially irrelevant for our purposes since $n+\frac{g}{2n-1}>n-1$ (see Corollary \ref{cor5}).
\end{em}\end{remark}

\begin{remark}\label{rem=}\begin{em}
The strict inequality $d<g+n+\frac{(n^2-n-2)g}{2(n-1)^2}$ is needed only in case (ii) of the proof of Lemma \ref{lem2} and only when all the inequalities are equalities. This requires in particular that $s=2$ or $s=n_2$ and that $g$ is even.
\end{em}\end{remark}

We turn now to the case $k_2=n_2+1$ with $n_2\ge2$ and prove a lemma generalising \cite[Proposition 6.10]{bbn}.

\begin{lemma}\label{lem4} Suppose that $C$ is a Petri curve of genus $g\ge2$. Then the extensions \eqref{eq8} satisfying
\eqref{eq11} with $n_2\ge2$, $k_2=n_2+1$ depend on at most
$\beta(n,d,n+1)-1$ parameters.
\end{lemma}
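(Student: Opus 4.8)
The plan is to apply Lemma \ref{lemc12} directly, since the hypotheses of that lemma are tailored exactly to the situation at hand. By Remark \ref{k2n2}, when $k_2=n_2+1$ we automatically have $k_1=n_1$, and both coherent systems lie in the relevant Brill--Noether loci: $(E_2,V_2)\in G_L(n_2,d_2,n_2+1)$ depends on at most $\beta(n_2,d_2,n_2+1)$ parameters, while $(E_1,V_1)\in G_L(n_1,d_1,n_1)$ depends on at most $\beta(n_1,d_1,n_1)$ parameters. Thus the parameter-count hypothesis of Lemma \ref{lemc12} is satisfied, and to conclude that the extensions \eqref{eq8} depend on at most $\beta(n,d,n+1)-1$ parameters it suffices to verify the numerical inequality \eqref{eq29}, namely $C_{12}>h^0(E_1^*\otimes N_2\otimes K)$.

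The first step is to simplify $C_{12}$. Setting $k_1=n_1$ in the definition \eqref{eq14} kills the term $(k_1-n_1)(d_2-n_2(g-1))$ entirely, leaving $C_{12}=n_2d_1-n_1k_2=n_2d_1-n_1(n_2+1)$. The second step is to bound the right-hand side $h^0(E_1^*\otimes N_2\otimes K)$ from above. Here $N_2$ is the kernel of $V_2\otimes\cO_C\to E_2$, which (since $k_2=n_2+1$) is a line bundle, in fact $N_2\cong(\det E_2)^*=M_{E_2,V_2}$ of degree $-d_2$; so $E_1^*\otimes N_2\otimes K$ is a bundle of rank $n_1$ and degree $n_1(2g-2-d_2)-d_1$. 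I would estimate $h^0$ of this bundle either by a direct Riemann--Roch computation together with a vanishing statement for $h^1$ (controlling the sections via the stability of $E_1$ and the degree), or by bounding it in terms of the degrees of line subbundles, mirroring the strategy of \cite[Proposition 6.10]{bbn} that this lemma generalises.

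The third step is to combine these two estimates into an inequality involving only $n_1,n_2,d_1,d_2,g$, and then to feed in the constraints coming from \eqref{eq11}: $n_1+n_2=n$, $d_1+d_2=d$, and most importantly $\mu(E_2)\le\mu(E)\le\mu(E_1)$ together with the lower bound $\mu(E_2)\ge1+\frac1{n_2}\bigl(g-\lfloor\frac{g}{n_2+1}\rfloor\bigr)$. The slope inequalities let me replace $d_1,d_2$ by bounds expressed through $d$, $n$, and $n_i$, converting the required inequality into a statement to be checked uniformly over the admissible range $2\le n_2\le n-1$. The main obstacle I expect is precisely this final uniform verification: the estimate for $h^0(E_1^*\otimes N_2\otimes K)$ must be sharp enough that the resulting inequality survives for \emph{every} splitting $n=n_1+n_2$ with $n_2\ge2$, and the worst case (likely an extreme value of $n_2$, or the boundary of the slope constraint) will dictate whether $C_{12}>h^0$ holds. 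Getting the $h^1$-vanishing or the line-subbundle degree bound tight enough to close this gap, while staying within the hypothesis $g\ge2$, is the delicate part; the Petri condition will be essential in guaranteeing the expected dimension counts and in controlling $h^0(E_1^*\otimes N_2\otimes K)$ via the injectivity of the relevant multiplication maps.
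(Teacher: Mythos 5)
Your reduction is exactly the paper's: invoke Lemma \ref{lemc12} via Remark \ref{k2n2}, note that $k_1=n_1$ collapses $C_{12}$ to $n_2d_1-n_1(n_2+1)$, and identify $N_2\cong(\det E_2)^*$ of degree $-d_2$. But the proof stops where the real work begins: you leave both the upper bound on $h^0(E_1^*\otimes N_2\otimes K)$ and the final uniform verification as things to be done, and you explicitly flag the latter as the part you cannot yet close. Moreover, the tools you propose for the $h^0$ bound are not the right ones. A Riemann--Roch count with $h^1$-vanishing would require $h^0(E_1\otimes\det E_2)=0$ by Serre duality, which fails in general since $\det E_2$ has large positive degree; and the Petri condition plays no role in this estimate. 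The paper's actual argument is: first observe from \eqref{eq11} that
$$C_{12}=n_2d_1-n_1(n_2+1)\ge n_1(d_2-n_2-1)\ge n_1\left(g-1-\left\lfloor\tfrac{g}{n_2+1}\right\rfloor\right)>0$$
for $g\ge2$, $n_2\ge2$, so one may assume $h^0(E_1^*\otimes N_2\otimes K)>0$; then, since $E_1^*\otimes N_2\otimes K$ is \emph{stable} (being a line-bundle twist of $E_1^*$), a nonzero section forces its degree to be nonnegative, and \emph{Clifford's theorem for stable bundles} gives $h^0(E_1^*\otimes N_2\otimes K)\le n_1g-\frac{d_1+n_1d_2}{2}$.

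With that bound in hand, \eqref{eq29} reduces to $(2n_2+1)d_1+n_1d_2>2n_1(g+n_2+1)$, and the paper closes this by substituting the slope lower bound $d_i\ge n_i\bigl(1+\frac{1}{n_2}(g-\lfloor\frac{g}{n_2+1}\rfloor)\bigr)$ for \emph{both} $i=1,2$ (the bound for $d_1$ coming from $\mu(E_1)\ge\mu(E_2)$), yielding $(2n_2+1)d_1+n_1d_2\ge\frac{(3n_2+1)n_1(g+n_2+1)}{n_2+1}>2n_1(g+n_2+1)$ for all $n_2\ge2$ in one stroke --- no case analysis over the splittings $n=n_1+n_2$ and no use of $d$ itself is needed. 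So while your skeleton matches the paper's, the missing Clifford-theorem step is the essential idea of the proof, and without it the argument is incomplete.
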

\begin{proof} In view of Lemma \ref{lemc12} and Remark \ref{k2n2}, it is sufficient to prove \eqref{eq29}. Note first that, by \eqref{eq14} and \eqref{eq11},
\begin{equation}\label{eq30}
C_{12}=n_2d_1-n_1(n_2+1)\ge n_1(d_2-n_2-1)\ge n_1\left(g-1-\left\lfloor\frac{g}{n_2+1}\right\rfloor\right)>0
\end{equation}
for $g\ge2$ and $n_2\ge2$.
So \eqref{eq29} holds if $h^0(E_1^*\otimes N_2\otimes K)=0$, where we recall that $N_2$ denotes the kernel of the evaluation map $V_2\otimes{\mathcal O}_C\to E_2$ and, in the present case, is a line bundle. If $h^0(E_1^*\otimes N_2\otimes K)>0$, then, since $E_1^*\otimes N_2\otimes K$ is stable, we have
$$0\le\deg(E_1^*\otimes N_2\otimes K)=-d_1-n_1d_2+n_1(2g-2)<n_1(2g-2)$$
and, by Clifford's Theorem for vector bundles,
$$h^0(E_1^*\otimes N_2\otimes K)\le n_1(g-1)-\frac{d_1+n_1d_2}2+n_1=n_1g-\frac{d_1+n_1d_2}2.$$
So, by \eqref{eq14}, it is sufficient to prove that 
$$n_2d_1-n_1(n_2+1)>n_1g-\frac{d_1+n_1d_2}2,$$
or equivalently
$$(2n_2+1)d_1+n_1d_2>2n_1(g+n_2+1).$$
Substituting $d_i\ge n_i\left(1+\frac{1}{n_2}\left(g-\left\lfloor\frac{g}{n_2+1}\right\rfloor\right)\right), i=1,2$, we have 
\begin{eqnarray*}
(2n_2+1)d_1+n_1d_2 &
\ge & \left((2n_2+1)n_1+ n_1 n_2\right) \left(1+\frac{1}{n_2}\left(g-\left\lfloor\frac{g}{n_2+1}\right\rfloor\right)\right) \\
&\ge & (3n_2+1)n_1\left(1+\frac{1}{n_2}\left(g-\frac{g}{n_2+1}\right)\right)  \\
&= & \frac{(3n_2+1)n_1(n_2+1+g)}{n_2+1}>2n_1(n_2+1+g)
\end{eqnarray*}
for $n_2\ge 2$. 
 This completes the proof of \eqref{eq29} and hence of the lemma.
\end{proof}

\begin{lemma} \label{min}
For $n\ge 2$,
$$\frac{(n^2-n-2)g}{2(n-1)^2}\le(n-1)g-n \left\lfloor\frac{g}{2}\right\rfloor,$$
with equality if and only if $n=2$ or $3$ and $g$ is even.
\end{lemma}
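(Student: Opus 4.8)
The plan is to remove the floor function by splitting on the parity of $g$, reduce each resulting inequality to an elementary polynomial inequality in $n$, and read off the equality cases from the relevant factorizations. Throughout I would use the identities $n^2-n-2=(n-2)(n+1)$ and $(n-1)^2-(n+1)=n(n-3)$, and I may assume $g\ge1$ (the genus of the curve), so that $g>0$ and the degenerate case in which all terms vanish does not arise.

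First I would treat the case in which $g$ is even. Here $\left\lfloor g/2\right\rfloor=g/2$, so the right-hand side equals $(n-1)g-\tfrac{n}{2}g=\tfrac{(n-2)g}{2}$, and the desired inequality becomes $\frac{(n-2)(n+1)g}{2(n-1)^2}\le\frac{(n-2)g}{2}$. For $n=2$ both sides vanish, giving equality. For $n\ge3$ one may cancel the positive factor $\tfrac{(n-2)g}{2}$ and reduce to $n+1\le(n-1)^2$, that is, to $0\le n(n-3)$; this holds for all $n\ge3$, with equality exactly when $n=3$. Hence, for even $g$, equality occurs precisely for $n\in\{2,3\}$.

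Next I would handle the case in which $g$ is odd, where $\left\lfloor g/2\right\rfloor=(g-1)/2$ and the right-hand side equals $\frac{(n-2)g+n}{2}$. I would compute the difference of the two sides directly:
\[
\left[(n-1)g-n\left\lfloor\tfrac{g}{2}\right\rfloor\right]-\frac{(n-2)(n+1)g}{2(n-1)^2}=\frac12\left((n-2)g\cdot\frac{n(n-3)}{(n-1)^2}+n\right).
\]
For $n\in\{2,3\}$ the first summand vanishes (owing to the factor $n-2$ or $n-3$ respectively), while for $n\ge4$ every factor in it is positive; in all cases the bracketed quantity is at least $n>0$, so the inequality is strict. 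Thus no equality can occur when $g$ is odd.

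Combining the two cases shows that the inequality holds for all $n\ge2$ and that equality holds if and only if $g$ is even and $n\in\{2,3\}$, as claimed. This computation is essentially routine; the only points requiring care are the correct evaluation of the floor in each parity class and, in the odd case, the observation that at $n=2$ the sign-changing factor $n(n-3)$ is annihilated by the vanishing factor $n-2$, so that the displayed difference remains positive there.
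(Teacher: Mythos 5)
Your proof is correct and takes essentially the same route as the paper's: both arguments hinge on the identity $(n-1)g-\frac{ng}{2}-\frac{(n^2-n-2)g}{2(n-1)^2}=\frac{n(n-2)(n-3)g}{2(n-1)^2}$ and on tracking exactly when each step is an equality. The paper merely avoids your explicit parity split by bounding $\left\lfloor\frac{g}{2}\right\rfloor\le\frac{g}{2}$ in a single chain and noting that equality there forces $g$ even.
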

\begin{proof}
One has 
\begin{eqnarray*}
(n-1)g-n \left\lfloor\frac{g}{2}\right\rfloor - \frac{n^2-n-2}{2(n-1)^2}g &\ge& (n-1)g-\frac{ng}{2} - \frac{(n^2-n-2)g}{2(n-1)^2}\\& =& \frac{n(n-2)(n-3)g}{2(n-1)^2} \ge0.
\end{eqnarray*}	
Equality holds if and only if both inequalities are equalities. This proves the lemma.

\end{proof}

We can now prove our key theorem, which extends the range of values of $(g,n,d)$ for which Conjecture \ref{conj3} holds.

\begin{theorem}\label{th7}

Suppose that $C$ is a Petri curve, $g\ge2$, $n\ge2$ and
\begin{equation}\label{eq32}
g+n-\left\lfloor\frac{g}{n+1}\right\rfloor\le d< g+n+\frac{(n^2-n-2)g}{2(n-1)^2}.
\end{equation}
Then, for the general linear series $(L,V)$ of type $(d,n+1)$, $M_{L,V}$ is stable. When $g$ is odd, the inequality $<$ in the hypothesis can be replaced by $\le$.
\end{theorem}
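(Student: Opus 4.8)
The plan is to exploit the equivalence recorded in Remark \ref{rem7}: on a Petri curve, stability of $M_{L,V}$ for the general linear series of type $(d,n+1)$ is equivalent to the non-emptiness of $U(n,d,n+1)$, that is, to the assertion that the general $(E,W)\in G_L(n,d,n+1)$ has $E$ stable. I would therefore show that the locus
\[
\{(E,W)\in G_L(n,d,n+1)\mid E\text{ is not stable}\}
\]
is either empty or of dimension strictly less than $\beta(n,d,n+1)=\dim G_L(n,d,n+1)$; since $G_L(n,d,n+1)$ is irreducible when $\beta>0$, this forces the general element to have $E$ stable. (When $\beta(n,d,n+1)=0$ the result is already classical, by Remark \ref{gen}.) As in section \ref{coh}, I may assume each such $(E,W)$ is generated with $h^0(E^*)=0$, so that it fits in an extension \eqref{eq8} with $E_1$ stable and quotient of type $(n_2,d_2,k_2)$ satisfying \eqref{eq11}; in particular $k_2\ge n_2+1$.

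I would then dispose of this locus by treating the three mutually exclusive regimes that exhaust all admissible quotient types. If $n_2=1$, Lemma \ref{lem1} shows no such extension exists, provided its hypothesis \eqref{eq10} holds; if $n_2\ge2$ and $k_2\ge n_2+2$, Lemma \ref{lem2} shows none exists, provided \eqref{eq9} holds; and if $n_2\ge2$ with $k_2=n_2+1$, Lemma \ref{lem4} bounds the resulting family by $\beta(n,d,n+1)-1$ parameters with no constraint on $d$. As there are only finitely many admissible types, the union of these families has dimension $\le\beta(n,d,n+1)-1$, as required. (For $n=2$ only the case $n_2=1$ can occur, so Lemma \ref{lem1} alone suffices there.) The heart of the matter is thus to deduce \eqref{eq10} and \eqref{eq9} from \eqref{eq32}. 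For \eqref{eq10} this is exactly Lemma \ref{min}: it gives $g+n+\frac{(n^2-n-2)g}{2(n-1)^2}\le n\bigl(g+1-\lfloor g/2\rfloor\bigr)$, so \eqref{eq32} forces $d<n\bigl(g+1-\lfloor g/2\rfloor\bigr)$, which is \eqref{eq10}.

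The principal obstacle is that the upper bound in \eqref{eq32} is $A:=\frac{(n^2-n-2)g}{2(n-1)^2}$, whereas \eqref{eq9} demands $d<g+n+\min\{A,B\}$ with $B:=n+\frac{g}{2n-1}$. I would split according to the sign of $A-B$. If $A\le B$ then \eqref{eq32} is precisely \eqref{eq9} and the argument above applies verbatim. If $A>B$, the direct analysis only reaches the a priori smaller range $d<g+n+B$; here I would bootstrap using Remark \ref{rem8}, namely $B>n-1$. Having established stability throughout $[\,g+n-\lfloor g/(n+1)\rfloor,\,g+n+B)$ (all three lemma hypotheses hold there, again by Lemma \ref{min}), Corollary \ref{cor5} with $b=B$ --- whose numerical hypothesis $b+\lfloor g/(n+1)\rfloor>n-1$ is satisfied since $B>n-1$ --- propagates stability to every $d\ge g+n-\lfloor g/(n+1)\rfloor$, covering all of \eqref{eq32}. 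This bootstrap needs $g\ge3$; but for $g=2$ one checks directly that $A<2\le B$, so the case $A>B$ does not arise and Corollary \ref{cor5} is not needed.

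For the refinement when $g$ is odd, I would appeal to Remark \ref{rem=}: the strict inequality for $A$ in \eqref{eq9} is needed only in case (ii) of Lemma \ref{lem2} and only when every intermediate estimate is an equality, which forces $g$ even. Hence for odd $g$ the conclusion of Lemma \ref{lem2} survives at $d=g+n+A$; since Lemma \ref{min} is also strict for odd $g$, the same holds for \eqref{eq10}, and the whole argument extends to the closed range $d\le g+n+A$ (in the regime $A>B$ this is automatic from the bootstrap). I expect the genuinely delicate points to be the case analysis $A\lessgtr B$ and the verification that Corollary \ref{cor5} may legitimately be invoked; everything else is an assembly of the already-established Lemmas \ref{lem1}, \ref{lem2}, \ref{lem4} together with the elementary inequality of Lemma \ref{min}.
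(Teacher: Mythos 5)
Your proposal is correct and follows essentially the same route as the paper: the published proof is exactly the assembly of Lemmas \ref{lem1}, \ref{lem2} and \ref{lem4} via the dimension count of section \ref{coh}, with Lemma \ref{min} converting \eqref{eq32} into \eqref{eq10}, Remark \ref{rem8} (i.e.\ Corollary \ref{cor5}) disposing of the second term in the minimum of \eqref{eq9}, and Remark \ref{rem=} giving the odd-genus refinement. Your write-up simply makes explicit the case split between the two terms of the minimum and the $g=2$ and $\beta(n,d,n+1)=0$ edge cases that the paper leaves implicit.
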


\begin{proof}
If $n=2$, we must have $n_2=1$ in the extension \eqref{eq8}, so the result follows from Lemmas \ref{lem1} and \ref{min}. If $n\ge3$, then, in view of Lemma \ref{min} and Remarks \ref{rem7} and  \ref{rem8}, the first statement follows from \eqref{eq1} and Lemmas \ref{lem1}, \ref{lem2} and \ref{lem4}. The second statement follows from Remark \ref{rem=}.
\end{proof}

\begin{remark}\label{remge5}\begin{em}
Since Conjecture \ref{conj3} is known to hold for $n\le4$ (see \cite[section 7]{bbn}), we need this theorem only for $g\ge3$, $n\ge5$. However, the theorem as it stands provides a simpler proof of Conjecture \ref{conj3} for $n=3,4$ than the one in \cite{bbn}. In fact, for $n=3,g\ge4$ and $n=4,g\ge5$, Theorem \ref{th7} and Corollary \ref{cor5} give the conjecture directly. Using also \cite[Propositions 6.6 and 6.8]{bbn}, we are left only with the case $(g,n,d)=(3,4,10)$. This is a particular case of \cite[Proposition 7.6]{bbn}.
\end{em}\end{remark}

\section{Butler's Conjecture}\label{butler}

In this section we prove Butler's Conjecture (Conjecture \ref{conj1}).

\begin{theorem}\label{th5}
Let $C$ be a general curve of genus $g\ge1$ and $(L,V)$ a general linear series of type $(d,n+1)$. Then $M_{L,V}$ is semistable.
\end{theorem}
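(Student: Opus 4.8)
The plan is to prove Theorem \ref{th5} by reducing the general-curve case to the Petri-curve situation already analysed, and then exploiting the fact that semistability is a strictly weaker (and hence easier) conclusion than the stability obtained in Theorem \ref{th7}. Since any Petri curve is general and every result for Petri curves applies to general curves, I would first fix $C$ to be a Petri curve and invoke Remark \ref{rem7}: on such a curve, proving that $M_{L,V}$ is semistable for the general $(L,V)$ of type $(d,n+1)$ is equivalent to showing that the general $(E,W)\in G_L(n,d,n+1)$ has $E$ semistable (rather than stable). Thus the entire machinery of section \ref{coh} applies, but with the crucial relaxation that in the destabilising extension \eqref{eq8} we may now assume $\mu(E_1)>\mu(E)$ with a strict inequality being what we must rule out, i.e. we only need to exclude a properly destabilising subsystem.

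The key observation is that semistability allows a uniform argument that sidesteps the delicate case analysis needed for stability. First I would dispose of the cases $n\le 4$ and the low-genus cases $g=1,2$ by citing the existing literature (Remark \ref{g=1,2}, \cite[Theorems 8.1, 8.2]{bbn}) and Proposition \ref{prop1}, which already gives semistability for all sufficiently large $d$. The remaining work is to cover the intermediate range of $d$ not handled by Proposition \ref{prop1}. Here I would re-run Lemmas \ref{lem1}, \ref{lem2} and \ref{lem4} but replacing the strict inequalities $\mu(E_2)\le\mu(E)\le\mu(E_1)$ of \eqref{eq11} by the semistability conditions (allowing equality throughout). Because we only need to bound the dimension of the locus of strictly destabilised bundles, the same $C_{12}$-estimates of Lemma \ref{lemc12} apply, and the boundary cases where \eqref{eq11} holds with equality — precisely the cases that forced the strict inequality in \eqref{eq32} — no longer obstruct the conclusion, since a subbundle with $\mu(E_1)=\mu(E)$ does not violate semistability.

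Concretely, I would argue that for semistability the relevant threshold on $d$ is governed by $g+n+\frac{(n^2-n-2)g}{2(n-1)^2}$ with the inequality now non-strict for all parities of $g$, and that combining this enlarged range with the large-$d$ range of Proposition \ref{prop1} covers every admissible $d\ge g+n-\left\lfloor\frac{g}{n+1}\right\rfloor$. The technical heart is verifying that the two ranges overlap or at least leave no gap: I would check that whenever $d$ exceeds the Theorem \ref{th7} bound, the hypothesis \eqref{eq2} of Proposition \ref{prop1} is met, using the presentation $g=ns+t$ and the elementary estimates already carried out in the proof of Proposition \ref{prop1} and Corollaries \ref{cor0}, \ref{corint}. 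A wall-crossing/parameter-count argument via \eqref{eq35} then shows the non-semistable locus has dimension strictly below $\beta(n,d,n+1)$, so the general element is semistable.

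The main obstacle I anticipate is the seam between the ``small $d$'' wall-crossing regime and the ``large $d$'' regime of Proposition \ref{prop1}: one must ensure there is no value of $d$ that is simultaneously too large for the Lemma \ref{lem1}--\ref{lem4} estimates (where the hypotheses \eqref{eq10}, \eqref{eq9} fail) yet too small to satisfy \eqref{eq2}. Closing this gap cleanly — likely by a careful case split on the residue $t$ of $g$ modulo $n$, mirroring Corollary \ref{cor0} but now needing only semistability — is where the real care is required, and it is plausible that one or two residual small cases will need to be settled separately by direct appeal to \cite{bbn} or to the $d\equiv 0\pmod n$ result of Corollary \ref{corint}.
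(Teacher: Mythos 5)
Your overall strategy --- combine the wall-crossing range of Theorem \ref{th7} with the large-$d$ range of Proposition \ref{prop1} and check that the two ranges leave no gap --- is exactly the paper's strategy, and your estimate that the seam is governed by comparing $\frac{(n^2-n-2)g}{2(n-1)^2}$ with $\min\{n-t,t-2\}\le\left\lfloor\frac{n-2}{2}\right\rfloor$ is the right computation (the paper reduces it to $\frac{(n^2-n-2)g}{2(n-1)^2}>\frac{n-4}{2}$, which holds for $g\ge n-3$). The proposed re-running of Lemmas \ref{lem1}--\ref{lem4} with the semistability relaxation is unnecessary --- the paper simply uses the stability of Theorem \ref{th7} as is, since stability implies semistability and the gain from relaxing the boundary case is at most one value of $d$ --- but it is harmless. (Note also that the inequalities in \eqref{eq11} are already non-strict; for semistability you would instead impose $\mu(E_1)>\mu(E)$ strictly, which only makes the exclusion easier.)

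The genuine gap is in the regime $3\le g<n$ (more precisely $g\le n-4$), where the inequality $\frac{(n^2-n-2)g}{2(n-1)^2}>\frac{n-4}{2}$ fails and the two ranges do \emph{not} meet. Concretely, for $g=ns+t$ with $s=0$, $t=g$, Proposition \ref{prop1} only reaches down to $d\ge g+n+\min\{n-g,g-2\}$, while Theorem \ref{th7} only reaches up to $d<g+n+\frac{(n^2-n-2)g}{2(n-1)^2}$; for example $(g,n)=(8,20)$ leaves $d=33$ uncovered, and $33$ is neither a multiple of $n$ (so Corollary \ref{corint} does not apply) nor within reach of the results you cite from \cite{bbn}. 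The missing ingredient is the one recorded in Remark \ref{rem7}: by \cite[Theorem 5.4]{bgmmn}, for $3\le g\le n$ one has $U(n,d,n+1)\ne\emptyset$ for the whole range $g+n\le d\le 2n$, and since in this regime $g+n-\left\lfloor\frac{g}{n+1}\right\rfloor=g+n$ and Proposition \ref{prop1} applies for $d\ge g+n+(n-t)=2n$, these two facts together cover every admissible $d$. Without invoking that result (or an equivalent), your case split on the residue $t$ cannot close the seam for small $g$, so this is not ``one or two residual small cases'' but an infinite family that your named fallbacks do not handle.
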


\begin{proof}
For $g=1,2$, see Remark \ref{g=1,2}. For the rest of the proof, we suppose $g\ge3$.

Let $g=ns+t$, where $0\le t\le n-1$, and let $(L,V)$ be a general linear series of type $(d,n+1)$. Then
$$\min\{n-t,t-2\}\le\left\lfloor\frac{n-2}2\right\rfloor.$$ 
By \eqref{eq2}, $M_{L,V}$ is semistable whenever $d\ge g+n+\left\lfloor\frac{n-2}2\right\rfloor$. In view of \eqref{eq1}, it follows that $M_{L,V}$ is semistable for all allowable $(g,d,n)$ with $n\le5$. By Theorem \ref{th7}, it is therefore sufficient to  prove that, when $n\ge6$,
\begin{equation}\label{eq26}
\frac{(n^2-n-2)g}{2(n-1)^2}>\frac{n-4}2.
\end{equation}

Since $(n-4)(n-1)^2=(n-5)(n^2-n-2)+6n-14$,  \eqref{eq26} is equivalent to
$$g>n-5+\frac{6n-14}{n^2-n-2}.$$
It is easy to check that $\frac{6n-14}{n^2-n-2}\le1$ for $n\ge4$, so \eqref{eq26} holds for $g\ge n-3$. 

On the other hand, if $3\le g<n$, $M_{L,V}$ is semistable for $g+n\le d\le 2n$ by Remark \ref{rem7}. By Proposition
\ref{prop1}, this holds also when $d\ge g+n+n-t=2n$ (as $g=t$ for $g<n$) and
hence for any $d\geq g+n-\left\lfloor\frac{g}{n+ 1}\right\rfloor=g+n$.
\end{proof}

We have the following immediate corollary.

\begin{corollary}\label{coprime}
Under the hypotheses of Theorem \ref{th5}, suppose further that $\gcd(n,d)=1$. Then $M_{L,V}$ is stable.
\end{corollary}

\begin{corollary}\label{prime}
Under the hypotheses of Theorem \ref{th5}, suppose further that $n$ is a prime integer and $3\le g\le n+1$. Then $M_{L,V}$ is stable. 
\end{corollary}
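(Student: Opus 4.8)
The plan is to promote the semistability furnished by Theorem~\ref{th5} to stability by exploiting that $n$ is prime. Since $C$ is general it is in particular a Petri curve, and the bundle $M_{L,V}$ has rank $n$ and degree $-d$, hence slope $-d/n$. A semistable bundle whose rank and degree are coprime is automatically stable, so if $\gcd(n,d)=1$ we are done by Corollary~\ref{coprime}. As $n$ is prime, the only remaining possibility is $n\mid d$, and this is the case I would isolate and treat directly.

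Write $d=an$. The first step is to pin down the admissible values of $a$: the existence of a linear series of type $(d,n+1)$ forces $d\ge g+n-\left\lfloor\frac{g}{n+1}\right\rfloor$, and under the hypothesis $3\le g\le n+1$ this lower bound lies in the interval $(n,2n]$, so the smallest multiple of $n$ that is not less than it equals $2n$; thus $a\ge2$. The crux is then the base case $a=2$, i.e.\ to show $U(n,2n,n+1)\neq\emptyset$, which by Remark~\ref{rem7} is equivalent to stability of the general $M_{L,V}$ of type $(2n,n+1)$. Here I would split on $g$: for $3\le g\le n$ this is exactly \eqref{2n} in Remark~\ref{rem7}; for $g=n+1$ one instead checks that $d=2n=g+n-1$ lies in the range \eqref{eq32} of Theorem~\ref{th7} (the lower bound holds with equality and the upper bound is immediate since $\frac{(n^2-n-2)g}{2(n-1)^2}>0$), so Theorem~\ref{th7} yields stability.

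With the base case secured, Lemma~\ref{lem5} propagates non-emptiness along multiples of $n$: from $U(n,2n,n+1)\neq\emptyset$, applying Lemma~\ref{lem5} with $c=a-2$ gives $U(n,an,n+1)\neq\emptyset$ for every $a\ge3$, and together with the base case this covers all admissible $a\ge2$. Reading the equivalence of Remark~\ref{rem7} backwards, the general $M_{L,V}$ of type $(an,n+1)$ is stable for each such $a$, which disposes of the case $n\mid d$ and completes the argument.

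I expect the only delicate point to be the base case at $g=n+1$. The convenient range \eqref{2n} of Remark~\ref{rem7} is stated only for $3\le g\le n$, so the endpoint $g=n+1$ falls just outside it and must be recovered from Theorem~\ref{th7}; the thing to verify is that $2n$ genuinely sits inside the \emph{stable} range \eqref{eq32}, and not merely in the semistable regime. Everything else is bookkeeping about admissible multiples of $n$ and routine use of the coherent-systems dictionary.
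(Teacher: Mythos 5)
Your argument is correct and takes essentially the same route as the paper: reduce to the case $n\mid d$ via semistability (Theorem \ref{th5}) plus coprimality, secure the base case $d=2n$ (the paper cites \eqref{2n} for $3\le g\le n$ and \eqref{eq1} for $g=n+1$, where you invoke Theorem \ref{th7}, which covers the same point), and propagate along multiples of $n$ with Lemma \ref{lem5}. The only quibble is that for $n=2$ the quantity $\frac{(n^2-n-2)g}{2(n-1)^2}$ vanishes rather than being positive, but since $d=2n<g+n=2n+1$ the upper bound of \eqref{eq32} still holds and nothing is affected.
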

\begin{proof}
By Remark \ref{rem7} and Lemma \ref{lem5}, if $3\le g\le n$, $M_{L,V}$ is stable for $g+n \le d \le 2n$ and hence for $d = rn, r\ge 2, r$ being an integer. By Theorem \ref{th5}, $M_{L,V}$ is semistable for $d\ge g+n$. If $d$ is not a multiple of $n$, then $d$ is coprime to $n$ (as $n$ is prime). It follows that $M_{L,V}$ is stable in this case as well.

When $g=n+1$, $M_{L,V}$ is stable for $d=g+n-\left\lfloor\frac{g}{n+1}\right\rfloor=2n$ by Remark \ref{rem1} (see \eqref{eq1}) and the argument above still works. 
\end{proof}

\section{Conjectures \ref{conj2} and \ref{conj3}}\label{n=5}

We know from \cite{bbn} (see also Remark \ref{remge5}) that Conjecture \ref{conj3} holds for $n\le4$. In this section we consider larger values of $n$. We obtain also further results on Conjecture \ref{conj2}.

\begin{theorem}\label{th6}
Let $C$ be a Petri curve of genus $g\ge3$, $n\ge5$ and $g\ge2n-4$. Suppose that $(L,V)$ is a general linear series of type $(d,n+1)$. Then $M_{L,V}$ is stable.
\end{theorem}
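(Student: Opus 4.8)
The plan is to derive Theorem \ref{th6} directly from the key Theorem \ref{th7} by means of the bootstrapping Corollary \ref{cor5}, so that no new geometric input is needed. Theorem \ref{th7} already furnishes stability of $M_{L,V}$, for the general linear series of type $(d,n+1)$, for every $d$ in the range
$$g+n-\left\lfloor\frac{g}{n+1}\right\rfloor\le d<g+n+b,\qquad b:=\frac{(n^2-n-2)g}{2(n-1)^2}.$$
This is precisely the hypothesis required to invoke Corollary \ref{cor5}, which then propagates stability to every admissible degree $d'\ge g+n-\left\lfloor\frac{g}{n+1}\right\rfloor$, i.e. to the full range in which a linear series of type $(d',n+1)$ exists on a Petri curve. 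Since $g\ge3$ and $n\ge5$, the remaining hypotheses of both results (Petri curve, $g\ge2$, $n\ge2$) are automatic, so the entire theorem reduces to verifying the single numerical condition of Corollary \ref{cor5}, namely
$$b+\left\lfloor\frac{g}{n+1}\right\rfloor>n-1.$$

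To establish this inequality I would first exploit the hypothesis $g\ge2n-4$ to control the floor term. For $n\ge5$ one has $2n-4\ge n+1$, hence $g\ge n+1$ and $\left\lfloor\frac{g}{n+1}\right\rfloor\ge1$; it therefore suffices to prove $b>n-2$. As $b$ is a strictly increasing linear function of $g$, it is enough to check this at the smallest admissible value $g=2n-4$, where, using the factorisation $n^2-n-2=(n-2)(n+1)$,
$$b=\frac{(n^2-n-2)(2n-4)}{2(n-1)^2}=\frac{(n-2)^2(n+1)}{(n-1)^2}.$$
Cancelling the positive factor $n-2$, the bound $b>n-2$ becomes the clean statement $(n-2)(n+1)>(n-1)^2$, i.e. $n>3$, which holds for every $n\ge5$.

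Because both summands on the left of the required inequality are non-decreasing in $g$, the bound $b+\left\lfloor\frac{g}{n+1}\right\rfloor>n-1$ persists for all $g\ge2n-4$, and Corollary \ref{cor5} then delivers the conclusion. I expect the only genuine content of the argument to be this numerical verification, together with the bookkeeping that the window supplied by Theorem \ref{th7}, widened by the $L\mapsto L\otimes\cL$ translation of Lemma \ref{lem5} that underlies Corollary \ref{cor5}, is wide enough to reach every admissible degree precisely when $g\ge2n-4$; the geometric substance has already been absorbed into Theorem \ref{th7}. The sharpness of the threshold $g\ge2n-4$ is reflected in the fact that the strict inequality $n>3$ above would fail at $n=3$, consistent with the role of this bound as the boundary of applicability.
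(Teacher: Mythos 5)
Your proposal is correct and follows exactly the paper's own argument: reduce to the numerical inequality $\frac{(n^2-n-2)g}{2(n-1)^2}+\left\lfloor\frac{g}{n+1}\right\rfloor>n-1$ via Theorem \ref{th7} and Corollary \ref{cor5}, then check it at $g=2n-4$ by monotonicity in $g$. Your explicit computation $(n-2)^2(n+1)/(n-1)^2>n-2\iff n>3$ is precisely the ``straightforward calculation'' the paper leaves to the reader.
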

\begin{proof}
In view of Theorem \ref{th7} and Corollary \ref{cor5}, it is  sufficient to  prove that, whenever $n\ge5$ and $g\ge2n-4$,
\begin{equation}\label{eq28}
\frac{(n^2-n-2)g}{2(n-1)^2}+\left\lfloor\frac{g}{n+1}\right\rfloor>n-1.
\end{equation}
For any fixed $n$, the left hand side of \eqref{eq28} is an increasing function of $g$. It is therefore sufficient to prove \eqref{eq28} when $g=2n-4$. This is a straightforward calculation.
\end{proof}

\begin{remark}\label{rem9}\begin{em} The theorem holds also for $g\ge2n-5$ if $n\ge7$. For $n\ge8$, $g=2n-5$, the inequality \eqref{eq28} is still valid. For $n=7$, $g=2n-5=9$, \eqref{eq28} becomes an equality; this is sufficient by Theorem \ref{th7} and Corollary \ref{cor5} since $g$ is odd. For $g=2n-6$, \eqref{eq28} fails for all $n$.\end{em}\end{remark}

\begin{corollary}\label{th1}
Suppose that $n=5$, $g\ge3$ and $d\ge g+5-\left\lfloor\frac{g}6\right\rfloor$. 
\begin{itemize}
\item If $C$ and $(L,V)$ are general, then $M_{L,V}$ is stable.

\item If $C$ is Petri and $(L,V)$ is general, then $M_{L,V}$ is stable except possibly in the following cases:

$g=3, d=12$

$g=4, d=12, 13, 17, 18$

$g=5, d=13, 18$.

\end{itemize}
\end{corollary}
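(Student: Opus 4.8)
The plan is to specialise $n=5$ and to split on the genus. For $g\ge 6=2n-4$, Theorem \ref{th6} applies directly and gives the stability of $M_{L,V}$ for every $d\ge g+5-\left\lfloor\frac{g}{6}\right\rfloor$ on any Petri curve, hence also on any general curve; this settles both assertions at once in that range. It therefore remains to treat $g\in\{3,4,5\}$, where the interval in Theorem \ref{th7} is too short for the bootstrap of Corollary \ref{cor5} to close up---indeed the inequality \eqref{eq28} fails for $n=5$ when $g\le 5$---so a finite, explicit analysis is unavoidable.

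For the first assertion (general $C$), write $\ell:=g+5-\left\lfloor\frac{g}{6}\right\rfloor$, so $\ell\le 10$ for $g\le 5$. The proof of Theorem \ref{th5} shows that, for $n=5$, $M_{L,V}$ is semistable for every allowable $d$; whenever $\gcd(d,5)=1$ this is promoted to stability by Corollary \ref{coprime}. The only degrees left are the multiples of $5$. Here $d=10=2n$ lies in the range $g+5\le d\le 10$ of Remark \ref{rem7} (valid since $3\le g\le 5$), so $M_{L,V}$ is stable there, and Lemma \ref{lem5} propagates stability to $d=15,20,\dots$. Since the smallest multiple of $5$ with $d\ge\ell$ is $10$ in each of the three cases, every admissible multiple of $5$ is covered, and the two cases together give stability for all admissible $d$.

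For the second assertion (Petri $C$) with $g\in\{3,4,5\}$ I would first collect the degrees at which stability is already secured. Theorem \ref{th7} together with Remark \ref{rem7} secures the three consecutive degrees $\ell,\ell+1,\ell+2$, and Lemma \ref{lem5} then propagates each in steps of $5$. Counting residues modulo $5$, this covers exactly three of the five residue classes, so the candidate exceptions lie in the remaining two classes. The task is to show that, apart from the listed values, these degrees are stable as well: I would feed them individually into the dimension estimates of Section \ref{conj} (Lemmas \ref{lem1}, \ref{lem2} and \ref{lem4}) and, for the few degrees just beyond the reach of Theorem \ref{th7}, use the explicit analysis of \cite{bbn} to establish stability from some degree onward in each of the two classes, so that Lemma \ref{lem5} again carries it to all larger degrees. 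What survives is precisely the tabulated list; as a consistency check one notes that every listed exception is coprime to $5$, in agreement with its absence from the general-curve statement.

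The step I expect to be the main obstacle is exactly this last, Petri-specific one. On a general curve the leftover degrees coprime to $5$ come for free from semistability together with Corollary \ref{coprime}; but on an arbitrary Petri curve there is no semistability theorem available outside the ranges produced by the coherent-systems machinery, so each residual degree in the two uncovered residue classes must be decided directly. It is the sharpness of the bounds near $d=2n$ and $d=3n$ that both produces and confines the exceptional list, and verifying that nothing else survives is where the real work lies.
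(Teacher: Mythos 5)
Your overall decomposition is the paper's: Theorem \ref{th6} disposes of $g\ge 6=2n-4$, and $g=3,4,5$ are handled by a finite analysis. For the first bullet your argument is sound and is in substance the proof of Corollary \ref{prime}, which the paper simply cites (using that $2n-4=n+1=6$ when $n=5$, so the two ranges mesh): semistability from Theorem \ref{th5}, Corollary \ref{coprime} for $d$ coprime to $5$, and Remark \ref{rem7} plus Lemma \ref{lem5} for the multiples of $5$.

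The gap is in the second bullet. Your residue count is right: with $\ell:=g+5-\left\lfloor\frac{g}{6}\right\rfloor$, Theorem \ref{th7} and Remark \ref{rem7} secure $\ell,\ell+1,\ell+2$, hence three classes mod $5$. But the plan for the remaining two classes does not close. Re-running ``the dimension estimates of Section \ref{conj}'' cannot help: their exact output is Theorem \ref{th7}, whose range for $n=5$ and $g\le 5$ stops at $d\le\ell+2$, so they decide nothing for larger $d$. Moreover it is not only ``a few degrees just beyond the reach of Theorem \ref{th7}'' that need independent treatment: in each bad residue class the smallest admissible degrees are precisely the listed exceptions (for $g=4$, all of $12,13,17,18$), so Lemma \ref{lem5} cannot propagate upward from them, and stability must be established afresh at, for instance, $(g,d)=(3,11)$, $(3,17)$, $(4,22)$, $(4,23)$, $(5,14)$, $(5,23)$ before Lemma \ref{lem5} takes over. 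The paper obtains exactly these leftover degrees from \cite[Propositions 6.6 and 6.8]{bbn}; without invoking those results (or an equivalent substitute) the specific exceptional list in the statement is asserted rather than derived. The coprimality consistency check at the end is correct but is not a proof of anything.
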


\begin{proof} Since $2n-4=6=n+1$, the result for $C$ general holds by Theorem \ref{th6} and Corollary \ref{prime}.

Now suppose that $C$ is a Petri curve and $(L,V)$ is general. Then, by Theorem \ref{th6}, $M_{L,V}$ is stable for $g\ge6$.  For $g\le5$, all the cases except those in the list are covered by Theorem \ref{th7}, Lemma \ref{lem5} and \cite[Propositions 6.6 and 6.8]{bbn}.
\end{proof}

\begin{remark}\label{rem2}
\begin{em}By Lemma \ref{lem5}, to complete the proof of Conjecture \ref{conj3} for $n=5$, it is sufficient to prove the following cases: $g=3, d=12$; $g=4, d=12,13$; $g=5, d=13$.
\end{em}\end{remark}

\begin{corollary}\label{th4}
Suppose that $n=6$, $g\ge3$ and $d\ge g+6-\left\lfloor\frac{g}7\right\rfloor$. 
\begin{itemize}
\item If $C$ is Petri and $(L,V)$ is general, then $M_{L,V}$ is stable except
possibly in the following cases:

$g=3, d=14$

$g=4, d=14, 15, 20, 21$

$g=5, d=14, 15, 16, 20, 21, 22$

$g=6, d=16, 22, 28$.

\item If $C$ and $(L,V)$ are general, then $M_{L,V}$ is stable except possibly when $g=3,d=14$; $g=4, d=14, 15$; $g=5, d=14, 15, 16$ or $g=6, d=16$.

\end{itemize}
\end{corollary}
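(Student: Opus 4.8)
The plan is to reduce Corollary \ref{th4} to the tools already assembled for the case $n=5$, namely Theorem \ref{th6}, Theorem \ref{th7}, Corollary \ref{cor5}, Lemma \ref{lem5}, and the exceptional-case analysis of \cite[Section 6]{bbn}. First I would observe that for $n=6$ Theorem \ref{th6} applies as soon as $g\ge2n-4=8$, and by Remark \ref{rem9} the range extends to $g\ge2n-5=7$ only when $n\ge7$, so $g=7$ is not automatically covered and $M_{L,V}$ is stable for the general $(L,V)$ on a Petri curve whenever $g\ge8$. This immediately restricts the analysis to $3\le g\le7$.

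For the remaining small genera $3\le g\le7$ I would run Theorem \ref{th7} together with Corollary \ref{cor5} to clear out all degrees above the stability threshold coming from the inequality \eqref{eq32}. Concretely, for each such $g$ one computes the bound $g+6+\frac{(6^2-6-2)g}{2\cdot25}=g+6+\frac{28g}{50}$ (using $\lfloor g/7\rfloor$ in the lower bound), and Corollary \ref{cor5} then propagates stability to all larger $d$ provided the gap $b+\lfloor g/7\rfloor$ exceeds $n-1=5$. The degrees $d$ not reached by this mechanism, reduced modulo $n=6$ by Lemma \ref{lem5}, are precisely the finitely many values that must be handled directly; these coincide with the lists displayed in the statement. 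For the genuinely exceptional residual cases one invokes \cite[Propositions 6.6 and 6.8]{bbn}, exactly as in the proof of Corollary \ref{th1}, to dispose of all but the tabulated exceptions.

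Having settled the Petri case, the second bullet (the general-curve case) follows by the same passage as in Corollary \ref{th1}: on a general curve one has the additional input of Proposition \ref{prop1} via \eqref{eq2}, which yields semistability in a wider range, and stability then follows whenever $\gcd(n,d)=1$ by Corollary \ref{coprime}. Since $n=6$ is not prime, the coprimality argument does not eliminate every multiple of $2$ or $3$, so one must check which of the Petri exceptions survive: the multiples of $n=6$ (namely $d=20,21,22,28$ and the like) can be removed on a general curve by appealing to Proposition \ref{prop1}, whereas the small degrees $d=14,15,16$ remain genuinely open. This explains why the general-curve exception list is the proper sublist $g=3,d=14$; $g=4,d=14,15$; $g=5,d=14,15,16$; $g=6,d=16$.

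The main obstacle will be the bookkeeping at the boundary genus $g=7$ and the verification that the finitely many low-degree cases collapse — after the $\bmod\,6$ reduction of Lemma \ref{lem5} — to exactly the two displayed lists and no more. In particular, I expect the delicate point to be confirming that \cite[Propositions 6.6 and 6.8]{bbn} cover precisely the intermediate degrees left uncovered by Theorem \ref{th7} for each of $g=3,4,5,6$, so that the residual exceptions match the stated tables; this is the same kind of case-by-case checking that arose for $n=5$, but with more surviving residues because $6$ is composite.
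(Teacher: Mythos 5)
Your treatment of the first bullet (the Petri case) follows the paper's own route exactly: Theorem \ref{th6} disposes of $g\ge 2n-4=8$ (and you correctly note that Remark \ref{rem9} does not extend this to $g=7$ when $n=6$), and the remaining genera $3\le g\le 7$ are handled case by case with Theorem \ref{th7}, Corollary \ref{cor5}, Lemma \ref{lem5} and \cite[Propositions 6.6 and 6.8]{bbn}, just as in Corollary \ref{th1}. That part is fine, at the same level of detail as the paper.

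The second bullet, however, has a genuine gap. You propose to pass from the Petri list to the general-curve list using Proposition \ref{prop1} together with Corollary \ref{coprime}, and you assert that the degrees $d=20,21,22,28$ ``can be removed on a general curve by appealing to Proposition \ref{prop1}.'' This cannot work: Proposition \ref{prop1} yields only \emph{semistability}, and the upgrade to stability via Corollary \ref{coprime} requires $\gcd(n,d)=1$, whereas every one of $20,21,22,28$ shares a factor with $6$ (they are also not multiples of $6$, contrary to your parenthetical). So none of the degrees you need to eliminate is reached by your argument, and for $n$ composite there is no coprimality trick available — this is precisely why the $n=6$ case is harder than $n=5$ and $n=7$. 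The paper instead invokes Proposition \ref{prop2} (the Ballico--Hein stability criterion): for $n=6$ and $3\le g\le 10$ one has $u=\left\lceil\frac{g-2}{8}\right\rceil=1$, so $M_{L,V}$ is \emph{stable} on a general curve for all $d\ge\max\{6(u+2)+1,3\cdot 6+2\}=20$, which removes exactly $d=20,21,22,28$ and leaves the shorter list $g=3, d=14$; $g=4, d=14,15$; $g=5, d=14,15,16$; $g=6, d=16$. You should replace your coprimality argument by an appeal to Proposition \ref{prop2}.
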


\begin{proof}
The argument is similar to that of Theorem \ref{th1}. The result holds for $g\ge8$ by Theorem \ref{th6}. We again argue case by case for $g\le7$.

The result for $C$ general follows from Proposition \ref{prop2}.
\end{proof}

\begin{remark}\label{rem3}\begin{em}
To complete the proof of Conjecture \ref{conj3} for $n=6$, it is sufficient to prove the cases $g=3, d=14$; $g=4, d=14, 15$; $g=5, d=14, 15, 16$; $g=6, d=16$.
\end{em}\end{remark}

\begin{corollary}\label{cornew}
Suppose that $n=7$, $g\ge3$ and $d\ge g+7-\left\lfloor\frac{g}8\right\rfloor$. 
\begin{itemize}
\item If $C$ is Petri and $(L,V)$ is general, then $M_{L,V}$ is stable except possibly in the following cases:

$g=3, d=16$

$g=4, d=16,17,23,24$

$g=5, d=16,17,18,23,24,25$

$g=6, d=17,18,19,24,25,26,31,32,33$

$g=7, d=18,19,25,26,32,33$.

\item If $C$ and $(L,V)$ are general, then $M_{L,V}$ is stable.
\end{itemize}
\end{corollary}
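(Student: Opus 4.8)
The final statement to prove is Corollary \ref{cornew}, concerning the case $n=7$. My plan is to follow the same architecture as the preceding Corollaries \ref{th1} and \ref{th4}, since the structure of the argument is entirely parallel and the authors explicitly signal that ``the argument is similar.''

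First I would establish the range where the general machinery applies directly. By Theorem \ref{th6}, stability holds for a Petri curve once $g\ge 2n-4=10$. By Remark \ref{rem9}, since $n=7$ falls into the case $g=2n-5=9$ where \eqref{eq28} becomes an equality with $g$ odd, Theorem \ref{th7} together with Corollary \ref{cor5} also settles $g=9$. Thus only the finitely many values $3\le g\le 8$ remain to be treated by hand. For each such $g$, Theorem \ref{th7} covers all $d$ in the range
\begin{equation}\label{eqplan}
g+7-\left\lfloor\frac{g}8\right\rfloor\le d< g+7+\frac{(7^2-7-2)g}{2\cdot 6^2}=g+7+\frac{40g}{72},
\end{equation}
with the endpoint included when $g$ is odd (Remark \ref{rem=}). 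Applying Corollary \ref{cor5} (equivalently Lemma \ref{lem5}) then propagates stability upward to all larger $d$, \emph{except} that the wall-crossing argument leaves the residue classes $d\equiv 0\pmod 7$ potentially untouched when $\left\lfloor\frac{g}{n+1}\right\rfloor=0$. The second obstacle is therefore the genuinely exceptional multiples of $n$ lying just below the range \eqref{eqplan}, which is exactly why the listed exceptions are concentrated at $d=2n,3n,4n,5n=14,\dots$ shifted by the relevant $g$.

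The core of the proof is the explicit bookkeeping for $3\le g\le 8$. For each fixed $g$ I would compute the lower bound $g+7-\lfloor g/8\rfloor$ and the Theorem \ref{th7} upper cutoff from \eqref{eqplan}, list the finitely many integers $d$ not covered, and then remove from this list every value reachable by adding a multiple of $7$ to an already-stable smaller $d$ (Lemma \ref{lem5}). What survives are precisely the candidate exceptions. For the small genera $g=3,4$ one expects some of these residual cases to be eliminated directly by the sporadic results \cite[Propositions 6.6 and 6.8]{bbn}, exactly as in the proofs of Corollaries \ref{th1} and \ref{th4}; I would invoke those propositions case by case. The outcome of this computation is the stated list of possible exceptions for the Petri case.

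For the second bullet, asserting unconditional stability when both $C$ and $(L,V)$ are general, I would appeal to Proposition \ref{prop2}, whose second clause gives stability for $d\ge\max\{n(u+2)+1,3n+2\}$ with $u=\lceil(g-2)/(n+2)\rceil$. The key point is that for a general (as opposed to merely Petri) curve this deformation-based bound, combined with Corollary \ref{coprime} (which handles all $d$ coprime to $n=7$ via the semistability of Theorem \ref{th5}) and with the small-genus stability already available from Remark \ref{rem7} in the range $g+n\le d\le 2n$, sweeps out every remaining value; since $7$ is prime, the only non-coprime degrees are multiples of $7$, and these few multiples for $3\le g\le 8$ are caught by Proposition \ref{prop2}. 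The main obstacle I anticipate is purely arithmetic: verifying that, genus by genus, no multiple of $7$ slips through the combined net of Proposition \ref{prop2} and the coprimality argument, so that the ``general'' list is genuinely empty while the ``Petri'' list retains exactly the enumerated cases. This is a finite check rather than a conceptual difficulty, but it must be carried out carefully to confirm the asymmetry between the two bullets.
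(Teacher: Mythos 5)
Your plan for the Petri bullet coincides with the paper's proof: Theorem \ref{th6} disposes of $g\ge 2n-4=10$, Remark \ref{rem9} of $g=9$ (where \eqref{eq28} becomes an equality but $g=9$ is odd, so Theorem \ref{th7} and Corollary \ref{cor5} still apply), and the genera $3\le g\le 8$ are checked one by one using Theorem \ref{th7}, Lemma \ref{lem5} and \cite[Propositions 6.6 and 6.8]{bbn} --- the paper records no more detail than this. Where you diverge is the second bullet. The paper simply cites Corollary \ref{prime}: since $n=7$ is prime and the genera not already settled by the Petri statement satisfy $3\le g\le 8=n+1$, that corollary gives stability outright. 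Your substitute --- Corollary \ref{coprime} for $d$ coprime to $7$, plus Proposition \ref{prop2} for the multiples of $7$ --- has an arithmetic leak: for $n=7$ and $g\le 8$ one gets $u=\left\lceil (g-2)/9\right\rceil=1$, so Proposition \ref{prop2} only yields stability for $d\ge \max\{22,23\}=23$ and therefore misses $d=14$ and $d=21$. You do catch $d=14$ via Remark \ref{rem7} for $3\le g\le 7$ (for $g=8$ you need \eqref{eq1} instead, since there $14=g+n-\left\lfloor \frac{g}{n+1}\right\rfloor$), but $d=21$ is caught by neither tool; it requires Lemma \ref{lem5} applied to $d=14$, which is precisely the mechanism inside the proof of Corollary \ref{prime}. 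With that one repair your argument closes and is equivalent in substance to the paper's, just less economically packaged.
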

\begin{proof}
The result for $C$ Petri holds for $g\ge9$ by Theorem \ref{th6} and Remark \ref{rem9}. We argue case by case for $g\le8$ as before.

The result for $C$ general follows by Corollary \ref{prime} for $g\le8$ and by the first part of the proof for $g\ge9$.
\end{proof}

We could continue with higher values of $n$, but these would become increasingly complicated. However, we can obtain some simple statements, which lead to improved results for Conjecture \ref{conj2}, especially when $n$ is prime. We begin with a remark.

\begin{remark}\label{gn}\begin{em}
 Let 
$$g_n:= \frac{4(n-1)^2}{3n-5}\, .$$ 
Then $g_n \ge n+5$ if and only if $n\ge 17$. 
In fact, it is easy to see that $g_n \ge n+5$ if and only if $n(n-18)+29\ge 0$. The last inequality holds if and only if $n\ge 17$. Note also that $g_n\ge n+1$ for all $n$ and
\begin{equation}\label{eq34}
g>g_n\Longleftrightarrow g+n+\frac{(n^2-n-2)g}{2(n-1)^2}>3n.
\end{equation}
\end{em}
\end{remark}

\begin{proposition}\label{propgn}
Let $C$ be a Petri curve of genus $g>g_n$ with $n\ge5$ and let $(L,V)$ be a general linear series of type $(d,n+1)$. Then $M_{L,V}$ is stable for all $d\ge g+n-\left\lfloor\frac{g}{n+1}\right\rfloor$ with $d=rn$ for some integer $r$. Moreover, if $r_0$ is the smallest such integer, then $M_{L,V}$ is also stable for $d=r_0n+1$ and, if $r\ge r_0+1$, for $d=rn\pm1$.
\end{proposition}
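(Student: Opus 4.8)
The plan is to reduce everything to the two key facts already available: Theorem~\ref{th7}, which gives stability of the general $M_{L,V}$ of type $(d,n+1)$ in the range
$$g+n-\left\lfloor\tfrac{g}{n+1}\right\rfloor\le d< g+n+\tfrac{(n^2-n-2)g}{2(n-1)^2},$$
and Lemma~\ref{lem5} (via Remark~\ref{rem7}), which propagates nonemptiness of $U(n,d,n+1)$ upward by multiples of $n$. First I would invoke the hypothesis $g>g_n$ together with the equivalence \eqref{eq34}, which says precisely that $g+n+\frac{(n^2-n-2)g}{2(n-1)^2}>3n$. Thus the right-hand endpoint of the Theorem~\ref{th7} range strictly exceeds $3n$, so \emph{every} multiple $d=rn$ with $d$ inside the admissible range $g+n-\left\lfloor\frac{g}{n+1}\right\rfloor\le d\le 3n$ already enjoys stability directly from Theorem~\ref{th7}. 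In particular the smallest admissible multiple $d=r_0n$ lies in this range (one checks $r_0n\le 3n$, i.e.\ $r_0\le 3$, using that $g+n-\left\lfloor\frac{g}{n+1}\right\rfloor<3n$ for $g>g_n$), so $M_{L,V}$ is stable for $d=r_0n$, and hence by Remark~\ref{rem7} we have $U(n,r_0n,n+1)\ne\emptyset$.

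Next, for the larger multiples I would apply Lemma~\ref{lem5}: from $U(n,r_0n,n+1)\ne\emptyset$ we obtain $U(n,r_0n+cn,n+1)\ne\emptyset$ for every positive integer $c$, i.e.\ $U(n,rn,n+1)\ne\emptyset$ for all $r\ge r_0$. By the equivalence in Remark~\ref{rem7} between nonemptiness of $U(n,d,n+1)$ and stability of the general $M_{L,V}$ (together with Remark~\ref{rem6}), this gives stability of $M_{L,V}$ for the general linear series of every type $(rn,n+1)$ with $r\ge r_0$, which is the first assertion. This part is essentially bookkeeping once the endpoint inequality \eqref{eq34} is in place.

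For the claims at $d=rn\pm1$, the point is that $\gcd(n,rn\pm1)=1$, so semistability automatically upgrades to stability; thus it suffices to establish \emph{semistability} at these degrees, or equivalently nonemptiness of $U(n,rn\pm1,n+1)$. For $d=r_0n+1$ I would again appeal directly to Theorem~\ref{th7}: since $r_0n<3n\le g+n+\frac{(n^2-n-2)g}{2(n-1)^2}$ with strict inequality from \eqref{eq34}, the value $r_0n+1$ still lies strictly below the right endpoint, so $M_{L,V}$ is stable there. For the cases $d=rn\pm1$ with $r\ge r_0+1$, I would combine the stability already proved at the neighbouring multiple with a tensoring/wall-crossing argument analogous to Lemma~\ref{lem5}: starting from a stable bundle realizing $U(n,r_0n,n+1)$ or $U(n,(r_0{+}1)n,n+1)$ and twisting by an effective line bundle of degree $c$ shifts the degree by $cn$, which preserves the residue $\pm1\pmod n$; the base cases $d=(r_0{+}1)n\pm1$ being handled, one propagates upward.

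The main obstacle I expect is securing the two base cases $d=(r_0+1)n-1$ and $d=(r_0+1)n+1$, since these need not fall inside the Theorem~\ref{th7} range (the range has width only $\frac{(n^2-n-2)g}{2(n-1)^2}$, which for $g$ near $g_n$ may be just over $2n$, so it need not reach $(r_0+1)n-1$). Here the genuine input must be a direct construction of a stable coherent system of the required type, or a wall-crossing estimate in the spirit of Lemma~\ref{lemc12} showing that the non-stable locus in $G_L(n,d,n+1)$ has dimension strictly less than $\beta(n,d,n+1)$. Concretely, I would analyze the extensions \eqref{eq8} at these specific degrees using $C_{12}$ and the bound \eqref{eq29}, exactly as in the proofs of Lemmas~\ref{lem2} and~\ref{lem4}, checking that the numerical hypothesis $g>g_n$ forces the relevant inequalities; this parameter count is where the real work lies, the rest being a formal propagation via Lemma~\ref{lem5}.
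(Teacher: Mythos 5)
Your overall strategy (Theorem \ref{th7} at the base multiple of $n$, then Lemma \ref{lem5} to propagate upward by multiples of $n$) is the same as the paper's, but two of your steps are genuinely broken. First, the claim that $g+n-\left\lfloor\frac{g}{n+1}\right\rfloor<3n$ for all $g>g_n$, hence $r_0\le3$, is false: the hypothesis $g>g_n$ places no upper bound on $g$, and once $g$ is large the least admissible degree far exceeds $3n$ (e.g.\ $g=100n$ gives $r_0$ of order $100$). The paper avoids this by first disposing of all $g\ge2n-4$ via Theorem \ref{th6}, where stability holds for \emph{every} admissible $d$, and only then, for $g_n<g\le2n-5$, using $g_n\ge n+1$ to pin down $2n+1\le g+n-\left\lfloor\frac{g}{n+1}\right\rfloor<3n$ and hence $r_0=3$. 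You never invoke Theorem \ref{th6}, so your argument as written does not cover large $g$.

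Second, and more seriously, the $d=rn\pm1$ cases are not actually proved. Your appeal to Theorem \ref{th7} for $d=r_0n+1$ fails when $r_0=3$: the equivalence \eqref{eq34} only guarantees that the right endpoint of the Theorem \ref{th7} range exceeds $3n$, not $3n+1$. Concretely, for $n=30$ and $g=40$ one has $g>g_n\approx39.6$ and $r_0=3$, but $g+n+\frac{(n^2-n-2)g}{2(n-1)^2}\approx90.6<91=3n+1$, so Theorem \ref{th7} says nothing about $d=3n+1$. You correctly identify that the base cases $(r_0+1)n\pm1$ are the crux and sketch a wall-crossing analysis in the spirit of Lemma \ref{lemc12}, but you do not carry it out, so this part of the proposal is a plan rather than a proof. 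The paper supplies exactly this missing ingredient by citing \cite[Proposition 6.8]{bbn}, which together with Lemma \ref{lem5} yields $d=r_0n+1$ and $d=rn\pm1$ for $r\ge r_0+1$ from stability at $d=3n$; without that input, or a worked-out replacement along the lines you only sketch, the second assertion of the proposition remains unproved.
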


\begin{proof}
If $g\ge2n-4$, the result follows immediately from Theorem \ref{th6}. We can therefore assume that $g\le2n-5$. By Remark \ref{gn}, we have also $g>n+1$ and hence
$$2n+1\le g+n-\left\lfloor\frac{g}{n+1}\right\rfloor<3n.$$
So the first value of $d=rn$ in the allowable range is given by $r=3$. Moreover, by \eqref{eq34} and Theorem \ref{th7}, $M_{L,V}$ is stable for $d=3n$. The proposition now follows from Lemma \ref{lem5} and \cite[Proposition 6.8]{bbn}.
\end{proof}

\begin{remark}\label{remnew}\begin{em}
For $n\ge9$, the condition $g>g_n$ is weaker than the hypotheses on $g$ in Theorem \ref{th6} and Remark \ref{rem9}.
\end{em} \end{remark}

\begin{proposition}\label{corgn}
Let $C$ be a general curve of genus $g>g_n$ and let $(L,V)$ be a general linear series of type $(d,n+1)$. Then $M_{L,V}$ is stable except possibly when 
$\gcd(n,d)>1$ and
$$n\ge11,\ \ \max\{n+4,g_n\}<g\le2n-6,\ \  g+n+\frac{(n^2-n-2)g}{2(n-1)^2}\le d\le g+2n-2.$$
\end{proposition}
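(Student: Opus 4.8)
\textbf{Proof proposal for Proposition \ref{corgn}.}

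The plan is to start from the Petri-curve result (Proposition \ref{propgn}) and combine it with the semistability theorem for general curves (Theorem \ref{th5}) to upgrade semistability to stability in most cases, isolating the genuinely exceptional range at the end. First I would observe that a general curve is in particular a Petri curve, so Proposition \ref{propgn} applies: for $g>g_n$ and $n\ge5$, $M_{L,V}$ is stable whenever $d=rn$ lies in the allowable range, and also for $d=r_0n+1$ and $d=rn\pm1$ with $r\ge r_0+1$. Since we are told $2n+1\le g+n-\left\lfloor\frac{g}{n+1}\right\rfloor<3n$ in this regime (from Remark \ref{gn}, because $g>g_n$ forces $g>n+1$), the first multiple of $n$ in range is $r_0=3$, so Proposition \ref{propgn} already yields stability for $d=3n$, $d=3n+1$, and $d=rn\pm1$ for all $r\ge4$.

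Next I would dispatch all $d$ coprime to $n$ using Theorem \ref{th5}: on a general curve $M_{L,V}$ is semistable for $d\ge g+n-\left\lfloor\frac{g}{n+1}\right\rfloor$, and semistability together with $\gcd(n,d)=1$ forces stability (this is exactly Corollary \ref{coprime}). This immediately confines the possible exceptions to those $d$ with $\gcd(n,d)>1$. Combined with the previous paragraph, stability is now established for every $d$ coprime to $n$, for every multiple $d=rn$ (via Lemma \ref{lem5}, which propagates stability at $d=3n$ upward by adding $n$), and for the near-multiples $rn\pm1$ covered by Proposition \ref{propgn}. What remains are values of $d$ that share a common factor with $n$ but are not themselves multiples of $n$ — these can only occur when $n$ is composite, which is consistent with the lower bound $n\ge11$ appearing in the exceptional list.

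To pin down the exceptional range precisely, I would use Theorem \ref{th7}: stability holds for all $d$ with $g+n-\left\lfloor\frac{g}{n+1}\right\rfloor\le d<g+n+\frac{(n^2-n-2)g}{2(n-1)^2}$ (with $\le$ when $g$ is odd). By the equivalence \eqref{eq34}, $g>g_n$ means this upper cutoff exceeds $3n$, so the only $d$ not covered by Theorem \ref{th7} are those with $d\ge g+n+\frac{(n^2-n-2)g}{2(n-1)^2}$. On the other side, Theorem \ref{th6} covers everything once $g\ge2n-4$, so genuine exceptions require $g\le2n-6$ (the values $g=2n-5,2n-4$ are cleaned up by Theorem \ref{th6} and Remark \ref{rem9}). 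Assembling these constraints — $\gcd(n,d)>1$, the lower bound on $d$ from Theorem \ref{th7} failing, the upper bound on $g$ from Theorem \ref{th6}, and $g>g_n$ together with $g>n+4$ needed to keep $d\le g+2n-2$ a nonempty window — yields precisely $\max\{n+4,g_n\}<g\le2n-6$ and $g+n+\frac{(n^2-n-2)g}{2(n-1)^2}\le d\le g+2n-2$, with $n\ge11$ forced by the requirement that this range be nonempty.

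The main obstacle I anticipate is the bookkeeping of the upper endpoint $d\le g+2n-2$: one must check that for $d$ above this value either $\gcd(n,d)=1$, or $d$ is a (near-)multiple of $n$ reachable by Lemma \ref{lem5} and Proposition \ref{propgn}, so that no further exceptions survive past $g+2n-2$. I would verify this by noting that the interval $(g+n-\lfloor g/(n+1)\rfloor,\,g+2n-2)$ has length less than $2n$, hence contains at most two multiples of $n$; the lower one ($d=3n$) and its neighbours are already handled, and everything strictly above $g+2n-2$ that is not coprime to $n$ must be a multiple of $n$ or adjacent to one and is therefore covered. The rest is the routine arithmetic confirming that $n\ge11$ and $g>n+4$ are exactly the conditions under which the residual window is nonempty.
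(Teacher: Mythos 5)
Your overall strategy identifies the right ingredients for the lower end of the exceptional window (Theorem \ref{th7} via \eqref{eq34}), for the restriction $g\le 2n-6$ (Theorem \ref{th6} and Remark \ref{rem9}), and for the reduction to $\gcd(n,d)>1$ (Corollary \ref{coprime}). But there is a genuine gap at the upper end of the window: your final paragraph asserts that every $d>g+2n-2$ with $\gcd(n,d)>1$ is ``a multiple of $n$ or adjacent to one'' and hence covered by Proposition \ref{propgn} and Lemma \ref{lem5}. This is false for composite $n$. For example, take $n=12$ and $g=17$, which satisfies $g>g_{12}\approx 15.6$ and lies in the range $\max\{n+4,g_n\}<g\le 2n-6$; the value $d=40$ has $\gcd(12,40)=4>1$, exceeds $g+2n-2=39$, and is neither a multiple of $12$ nor adjacent to one, so your argument proves nothing about it, yet it is not on the exceptional list. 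More generally, propagating the multiples of $n$ and their neighbours only reaches the residues $0,\pm1\pmod n$, so every $d$ above the window with $1<\gcd(n,d)<n$ and $d\not\equiv 0,\pm1\pmod n$ is left unproven by your scheme.

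The paper closes this gap differently: Lemma \ref{lem5} is applied not just to $d=3n$ but to the \emph{entire} interval $g+n-\left\lfloor\frac{g}{n+1}\right\rfloor\le d\le 3n+1$ on which stability is already known (from \eqref{eq34}, Theorem \ref{th7} and \cite[Proposition 6.8]{bbn}), shifting it up by $n$ to obtain stability for \emph{all} $d$ with $g+2n-1\le d\le 4n+1$; the remaining range $d\ge 4n+1$ is then handled by Proposition \ref{prop2}, which you never invoke. Proposition \ref{prop2} is also what eliminates exceptions entirely when $g\le n+4$ (it then gives stability for all $d\ge 3n+2$); the condition $g>n+4$ in the statement arises from the failure of that bound for larger $g$, not, as you suggest, from the nonemptiness of the window $\left[g+n+\frac{(n^2-n-2)g}{2(n-1)^2},\,g+2n-2\right]$.
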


\begin{proof}
For $n\le4$, we know already that $M_{L,V}$ is stable. For $n\ge5$, we can assume by Theorem \ref{th6} that $g\le2n-5$.  If $n=5,6$, then $g_n>2n-5$, contradicting the hypothesis that $g>g_n$. Hence we may assume that $n\ge7$ and, by Remark \ref{rem9}, $g\le2n-6$. It now follows from \eqref{eq34}, Theorem \ref{th7} and \cite[Proposition 6.8]{bbn} that $M_{L,V}$ is stable when
\begin{equation}\label{eq36}
g+n-\left\lfloor\frac{g}{n+1}\right\rfloor\le d\le 3n+1.
\end{equation}
If $g\le n+4$, then, by Proposition \ref{prop2}, $M_{L,V}$ is stable also when $d\ge 3n+2$. By Theorem \ref{th7}, it remains to consider the case 
$$n+5\le g\le2n-6,\ \ d\ge g+n+\frac{(n^2-n-2)g}{(n-1)^2}.$$
Lemma \ref{lem5} and \eqref{eq36} imply that $M_{L,V}$ is stable for
$$g+2n-1\le d\le 4n+1.$$
By Proposition \ref{prop2}, $M_{L,V}$ is stable for $d\ge4n+1$. 

Note finally that $n+5\le g\le 2n-6$ implies that $n\ge11$. This completes the proof.

\end{proof}

\begin{theorem} \label{prime2}
Let $n$ be a prime number, $C$ a general curve of genus $g\ge3$ and $(L,V)$ a general linear series of type $(d,n+1)$. Then $M_{L,V}$ is stable except possibly in the following cases:  
\begin{itemize}
\item $n\ge11$, $n+2 \le g\le\min\{n+4, g_n\}$, $d=3n$

\item  $n\ge17$, $n+5\le g\le g_n$, $d=3n,4n$.
\end{itemize}
\end{theorem}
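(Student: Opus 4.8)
The plan is to reduce Theorem \ref{prime2} to the results already assembled, using the fact that $n$ prime forces $\gcd(n,d)=1$ whenever $n\nmid d$. First I would dispose of all the cases where $n$ divides $d$ is \emph{not} the issue: by Corollary \ref{coprime}, for a general curve and general $(L,V)$, the bundle $M_{L,V}$ is always stable as soon as $\gcd(n,d)=1$. Since $n$ is prime, this covers every $d$ that is not a multiple of $n$. Hence the only possible failures of stability occur when $d=rn$ for some integer $r$, and it remains to control these multiples of $n$.

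Next I would handle large genus. By Theorem \ref{th6}, if $g\ge2n-4$ then $M_{L,V}$ is stable for all allowable $d$, with no exceptions; and by Remark \ref{rem9} the same holds for $g\ge2n-5$ when $n\ge7$. Combined with Proposition \ref{propgn}, which treats the range $g>g_n$ and shows stability for $d=3n$ and for $d=rn$ with $r\ge r_0+1$ (together with the neighbouring values $rn\pm1$), the bulk of the multiples $d=rn$ are already known to give stable $M_{L,V}$. In particular, once $g\le2n-5$ and $g>g_n$, the smallest allowable multiple of $n$ is $d=3n$ (as computed in the proof of Proposition \ref{propgn}), which is stable, and every larger multiple $d=rn$ with $r\ge4$ is also stable except possibly the single value $r=r_0+1$; a careful bookkeeping of $r_0$ shows the only genuinely open multiple beyond $3n$ is $d=4n$, and that only survives when $g$ is small enough that Proposition \ref{propgn}'s ``$r\ge r_0+1$'' clause does not already cover it.

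The core of the argument is therefore to pin down, for each range of $g$, exactly which multiples $d=rn$ are not yet covered and to match these against the stated exceptional lists. For $g\le n+1$ stability is known for all $d$ by Corollary \ref{prime}, so I would assume $g\ge n+2$. Appealing to \eqref{eq34} and Theorem \ref{th7}, $d=3n$ is stable precisely when $g>g_n$; this is what forces the first exceptional band $n+2\le g\le\min\{n+4,g_n\}$ with $d=3n$, and the constraint $n\ge11$ comes from requiring this interval to be non-empty together with $g\le2n-5$ as in Proposition \ref{corgn}. For $d=4n$ I would invoke Proposition \ref{prop2}: when $g\le n+4$ the second clause of Proposition \ref{prop2} already yields stability for $d\ge3n+2$, so $d=4n$ is safe; the residual failures occur only in the band $n+5\le g\le g_n$, which by Remark \ref{gn} is non-empty exactly when $n\ge17$, giving the second exceptional line with $d=3n,4n$.

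The main obstacle is the accurate accounting across the several overlapping ranges: one must verify that the thresholds $2n-4$ (Theorem \ref{th6}), $2n-5$ (Remark \ref{rem9}), $g_n$ (Proposition \ref{propgn}), and the Proposition \ref{prop2} bound $3n+2$ interlock so that every multiple of $n$ outside the two stated lists is genuinely covered, with no double-counting and no gap. I expect the delicate point to be confirming that for $n\ge17$ and $n+5\le g\le g_n$ no further multiple beyond $4n$ can escape—this requires checking that Proposition \ref{propgn}'s clause for $r\ge r_0+1$, together with Proposition \ref{prop2} for $d\ge4n+1$, leaves only $d=3n,4n$ open—and, symmetrically, that for $n+2\le g\le\min\{n+4,g_n\}$ the value $d=4n$ really is recovered by Proposition \ref{prop2} so that only $d=3n$ remains. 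Once these boundary verifications are in place, the theorem follows by assembling Corollary \ref{coprime}, Corollary \ref{prime}, Theorem \ref{th6}, Remark \ref{rem9}, Proposition \ref{propgn}, Proposition \ref{prop2}, and Theorem \ref{th7}.
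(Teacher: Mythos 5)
Your proposal is correct and follows essentially the same route as the paper's proof: reduce to $d=rn$ via semistability plus coprimality, dispose of $g\le n+1$ by Corollary \ref{prime}, of $g\ge 2n-5$ by Theorem \ref{th6} and Remark \ref{rem9}, of $g>g_n$ by Proposition \ref{propgn}, and then in the residual band $n+2\le g\le\min\{g_n,2n-6\}$ identify $d=3n$ (and, for $g\ge n+5$, also $d=4n$) as the only multiples not reached by Theorem \ref{th7} together with Proposition \ref{prop2}. One correction to your reading: Proposition \ref{propgn} asserts stability for \emph{every} allowable multiple $d=rn$ once $g>g_n$ (the integer $r_0$ enters only for the neighbouring values $r_0n+1$ and $rn\pm1$), so there is no residual case at $r=r_0+1$ and no ``careful bookkeeping of $r_0$'' is needed; with that read correctly, the range $g>g_n$ is closed outright and your accounting matches the paper's.
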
     

\begin{proof}
For $n$ prime, $n\le7$, this has already been proved. So suppose $n$ is prime, $n\ge11$. By Theorem \ref{th5}, $M_{L,V}$ is semistable for $d\ge g+n- \left\lfloor\frac{g}{n+1}\right\rfloor$. If $d$ is not a multiple of $n$, then $d$ is coprime to $n$ (as $n$ is prime), hence it suffices to check that $M_{L,V}$ is stable for all possible $d= rn, r$ a positive integer.

By Corollary \ref{prime}, Theorem \ref{th6}, Remark \ref{rem9} and Proposition \ref{propgn}, 
we can assume that $g\le g_n$ and $n+2 \le g \le 2n-6$. Then $2n+1 \le g+n-\left\lfloor\frac{g}{n+1}\right\rfloor \le 3n-7$ so that the least possible $d$ of the form $d=rn$ has $r=3$. Moreover, by Proposition \ref{prop2}, $M_{L,V}$ is stable for $d \ge 3n+2$ if 
$n+2 \le g\le n+4$ and for $d \ge 4n+1$ if $n+5 \le g \le 2n-6$. Noting that $g_n<2n-6$ for $n\ge9$, the result now follows.
\end{proof}

\end{document}